\numberwithin{equation}{section} 
\theoremstyle{plain}
\newtheorem{theo+}           {Theorem}      [section]
\newtheorem{prop+}  [theo+]  {Proposition}
\newtheorem{coro+}  [theo+]  {Corollary}
\newtheorem{lemm+}  [theo+]  {Lemma}
\newtheorem{defi+}  [theo+]  {Definition}
\newtheorem{conj+}  [theo+]  {Conjecture}
\theoremstyle{definition}
\newtheorem{rema+}  [theo+]  {Remark}
\newtheorem{prob+}  [theo+]  {Problem}
\newtheorem{exam+}  [theo+]  {Example}
\newenvironment{theorem}{\begin{theo+}}{\end{theo+}}
\newenvironment{proposition}{\begin{prop+}}{\end{prop+}}
\newenvironment{lemma}{\begin{lemm+}}{\end{lemm+}}
\newcommand{\om}{\omega}
\newcommand{\ti}{\mathrm i}
\begin{document}

\baselineskip 18pt
\larger[2]
\title
[Proofs of some partition identities conjectured by Kanade and Russell] 
{Proofs of some partition identities\\ conjectured by Kanade and Russell}
\author{Hjalmar Rosengren}
\address
{Department of Mathematical Sciences
\\ Chalmers University of Technology and University of Gothenburg\\SE-412~96 G\"oteborg, Sweden}
\email{hjalmar@chalmers.se}
\urladdr{http://www.math.chalmers.se/{\textasciitilde}hjalmar}

\thanks{Supported by the Swedish Science Research
Council (Vetenskapsr\aa det). }

\begin{abstract}
Kanade and Russell conjectured several Rogers--Ramanujan-type partition identities, some of which are related to level $2$ characters of the affine Lie algebra $A_9^{(2)}$. Many of these conjectures
have been proved by Bringmann, Jennings--Shaffer and Mahlburg. We give new
proofs of five conjectures first proved by those authors, as well as four others that have been open until now. Our proofs for the new cases use
quadratic transformations for Askey--Wilson and Rogers polynomials. We also obtain some related results, including a new proof of a partition identity conjectured by Capparelli and first proved by Andrews.
\end{abstract}

\maketitle

\section{Introduction}  

The Rogers--Ramanujan identities  are
\begin{equation}\label{rr} \sum_{n=0}^\infty\frac{q^{n^2}}{(q;q)_n}=\frac 1{(q,q^4;q^5)_\infty},\qquad
\sum_{n=0}^\infty\frac{q^{n(n+1)}}{(q;q)_n}=\frac 1{(q^2,q^3;q^5)_\infty}.
\end{equation}
Here,  we use the standard notation
$$ (a;q)_n=\prod_{j=0}^{n-1}(1-aq^j)$$
and
$$(a_1,\dots,a_m;q)_n=(a_1;q)_m\dotsm (a_m;q)_n,$$
where $n$ may be infinite. These intriguing identities were first found by Rogers \cite{r2} and rediscovered by Ramanujan \cite{ra}. 

The identities \eqref{rr} have a clear combinatorial meaning; for instance, the first identity states that the number of partitions of $n$ without repeated or consecutive parts equals the number of partitions of $n$
into parts congruent to $\pm 1\ \operatorname{mod}\ 5$. They also have deep connections to affine Lie algebras.  Lepowsky and Milne \cite{lm} found that the product sides of \eqref{rr} can be interpreted as specialized characters of
 level 3 modules of the Lie algebra $A_1^{(1)}$. 
 Lepowsky and Wilson \cite{lw1,lw2} were able to use this fact to give Lie-theoretic proofs of \eqref{rr}.

There is now a huge literature on relations between Lie algebras and Rogers--Ramanujan-type identities. 
Of particular relevance to the present work is Misra's observation \cite{m} that the product sides in \eqref{rr} may alternatively be interpreted 
as level $2$ characters of $A_7^{(2)}$.
This motivated Kanade and Russell \cite{kr} to search for  Rogers--Ramanujan-type identities related to level  $2$ characters of 
$A_9^{(2)}$. 
They discovered the three conjectured identities \eqref{kcc}--\eqref{kce} below, which
 give explicit values of the function
\begin{equation}\label{f}F(u,v,w)=\sum_{i,j,k=0}^\infty\frac{(-1)^kq^{3k(k-1)+(i+2j+3k)(i+2j+3k-1)}u^iv^jw^k}{(q;q)_i(q^4;q^4)_j(q^6;q^6)_k}.\end{equation}
By an extended search, they found six more cases when $F$ appears to have a simple factorization. The resulting nine conjectures are
\begin{subequations}\label{krai}
\begin{align}
\label{kca}F(q,1,q^3)&=\frac{(q^3;q^{12})_\infty}{(q,q^2;q^4)_\infty},\\
\label{kcb}F(q^2,q^4,q^9)&=\frac{(q^9;q^{12})_\infty}{(q^2,q^3;q^4)_\infty},\\
\label{kcc}F(q^4,q^6,q^{15})&=\frac{1}{(q^4,q^5,q^6,q^7,q^8;q^{12})_\infty},\\
\label{kcd}F(q,q^6,q^9)&=\frac{1}{(q,q^4,q^6,q^8,q^{11};q^{12})_\infty},\\
\label{kce}F(q^2,q^2,q^9)&=\frac{(q^6;q^{12})_\infty}{(q^2,q^3,q^4;q^6)_\infty},\\
\label{fsa}F(q^3,q^5,q^{12})&=\frac 1{(q^3;q^4)_\infty(q^4,q^{5};q^{12})_\infty},\\
\label{fsc}F(q,q^3,q^6)&=\frac 1{(q;q^4)_\infty(q^4,q^{11};q^{12})_\infty},\\
\label{fsb}F(q,q,q^6)&=\frac 1{(q^3;q^4)_\infty(q,q^8;q^{12})_\infty},\\
\label{krx}F(q^2,q^{-1},q^6)&=\frac 1{(q;q^4)_\infty(q^7,q^8;q^{12})_\infty};
\end{align}
\end{subequations}
see Conjecture 5, 5a, 3, 1, 2, 6a, 4, 6 and 4a in \cite{kr}, respectively.
We have stated them more or less in order of increasing difficulty, at least in our approach.
For each conjecture, Kanade and Russell gave a combinatorial interpretation in terms of partitions.
We stress that \cite{kr} contains several other conjectures that will not be discussed here.

The identities \eqref{kca}--\eqref{kce} were proved by Bringmann et al.\ \cite{bjm}. In the present work, we give a more streamlined proof of those results, and prove the remaining conjectures \eqref{fsa}--\eqref{krx} for the first time.
 In each case, we use an integral representation of the triple sum to reduce it to a single sum. The identities \eqref{kca}--\eqref{kcb} then follow from classical $q$-hypergeometric summation formulas. The identities \eqref{kcc}--\eqref{kce} are also reduced to classical summations, but we need Watson's quintuple product identity \cite{w}
 to simplify the result. The identities \eqref{fsa}--\eqref{krx} are  harder since we need one-variable summations that cannot be found in the literature.  
 To prove them we use  $q$-orthogonal polynomials.
 Roughly speaking, we identify the series we wish to sum with generating functions for Askey--Wilson polynomials and then use quadratic transformations,
 relating Askey--Wilson and Rogers polynomials, to compute them. 
 It is worth mentioning that Rogers polynomials first appeared in Rogers' original proof of \eqref{rr}. 

The plan of the paper is as follows. After the preliminary \S \ref{ps}, we
obtain in \S \ref{rtss} five one-parameter families of reduction formulas, which reduce the triple series $F$ to single series. The left-hand sides of \eqref{krai} all belong to one of these families.
In \S \ref{bjms} we briefly describe how the identities \eqref{kca}--\eqref{kce} are obtained in our approach. In \S \ref{omss} we give some results for other types of multiple series that can be obtained with our methods. In particular, we obtain a new proof of a partition identity related to level $3$ characters of $A_2^{(2)}$,
which was
conjectured by Capparelli \cite{c} and first proved by  Andrews \cite{a}.   The most technical part of the paper is \S \ref{ncs}, where we prove \eqref{fsa}--\eqref{krx}, as well as a
related identity given in Theorem \ref{ntst}.

{\bf Acknowledgments:} This work was supported by the Swedish Research Council. I thank Chris Jennings-Shaffer for his useful comments on the manuscript, leading to many improvements.

\section{Preliminaries}\label{ps}
Throughout, $\omega$ is a primitive cubic root of unity and $q$ a number with $0<|q|<1$. 

For ease of reference, 
we recall some classical results  for the basic hypergeometric series \cite{gr}
$$ {}_{r}\phi_s\left(\begin{matrix}
a_1,\dots,a_{r}\\b_1,\dots,b_s
\end{matrix};q,z\right)=\sum_{n=0}^\infty\frac{(a_1,\dots,a_{r};q)_n}{(b_1,\dots,b_s;q)_n}\,\left((-1)^nq^{\binom n2}\right)^{1+s-r}z^n.$$
 Namely, we have Euler's $q$-exponential identities
\begin{equation}\label{eqe}\sum_{n=0}^\infty \frac {z^n}{(q;q)_n}=\frac 1{(z;q)_\infty},\qquad
\sum_{n=0}^\infty \frac {q^{\binom n2}z^n}{(q;q)_n}={(-z;q)_\infty}, \end{equation}
the $q$-binomial theorem
\begin{equation}\label{qbt}{}_1\phi_0(a;q,z)=\sum_{n=0}^\infty \frac {(a;q)_n}{(q;q)_n}\,z^n=\frac{(az;q)_\infty}{(z;q)_\infty} \end{equation}
and its disguised version
\begin{equation}\label{aqbt}{}_2\phi_1\left(\begin{matrix}
a,-a\\-q
\end{matrix};q,z\right)=\sum_{n=0}^\infty\frac{(a^2;q^2)_n}{(q^2;q^2)_n}\,z^n=\frac{(a^2z;q^2)_\infty}{(z;q^2)_\infty}. \end{equation}
Applying one of Heine's transformation formulas to \eqref{aqbt} gives the Bailey--Daum summation
\begin{equation}\label{bds}{}_2\phi_1\left(\begin{matrix}
a,b\\aq/b\,
\end{matrix};q, -\frac{q}{b}\right)=\frac{(-q;q)_\infty(aq,aq^2/b^2;q^2)_\infty}{(aq/b,-q/b;q)_\infty}. 
\end{equation}
The $q$-Gauss summation is
\begin{equation}\label{qgs}{}_2\phi_1\left(\begin{matrix}
a,b\\c
\end{matrix};q, \frac{c}{ab}\right)=\frac{(c/a,c/b;q)_\infty}{(c,c/ab;q)_\infty}. \end{equation}
Finally, we mention the transformation
\begin{equation}\label{iht}{}_2\phi_1\left(\begin{matrix}
a,b\\c
\end{matrix};q,z\right)=\frac{(az,bz;q)_\infty}{(c,z;q)_\infty}
\,{}_2\phi_2\left(\begin{matrix}
z,abz/c\\az,bz
\end{matrix};q,c\right),
\end{equation}
which follows by combining \cite[(III.2) and (III.4)]{gr}.

We will need Jacobi's triple product identity
\begin{equation}\label{jtp}(q,z,q/z;q)_\infty=\sum_{n=-\infty}^\infty(-1)^nq^{\binom n2}z^n\end{equation}
as well as  Watson's quintuple product identity in the form \cite[Lemma~6.2]{r}
$$\frac 1{1+z}\frac{(z^2,z^{-2};q^3)_\infty}{(z,z^{-1};q^3)_\infty}=\frac{1-\om}{3}\frac{(q;q)_\infty}{(q^3;q^3)_\infty}\big((zq\om,\omega^2/z;q)_\infty-\om^2(zq\om^2,\om/z;q)_\infty\big). $$
If we replace $q$ by $q^4$ and then let $z=q^{-5}$, $z=q^{-1}$ and $z=q^{-3}$, respectively, we obtain
\begin{subequations}\label{wqs}
\begin{align}
\label{wqa}\frac{(q^2,q^{10};q^{12})_\infty}{(q^4,q^5,q^7,q^8;q^{12})_\infty}&=\frac{\om-\om^2}{3q}
\big((q\om,q^3\om^2;q^4)_\infty-(q\om^2,q^3\om;q^4)_\infty\big),\\
\label{wqb}\frac{(q^2,q^{10};q^{12})_\infty}{(q,q^4,q^8,q^{11};q^{12})_\infty}&=\frac{1-\om^2}{3}
\big((q\om,q^3\om^2;q^4)_\infty-\om(q\om^2,q^3\om;q^4)_\infty\big),\\
\label{wqc}\frac{(q^6;q^{12})_\infty^2}{(q^3,q^4,q^8,q^9;q^{12})_\infty}&=\frac{1-\om}{3}
\big((q\om,q^3\om^2;q^4)_\infty-\om^2(q\om^2,q^3\om;q^4)_\infty\big).
 \end{align}
\end{subequations}

\section{Reduction from triple to single series}
\label{rtss}

In \cite{bjm}, Bringmann et al.\ showed that the nine
conjectures \eqref{krai} can be reduced to
sums of single series, which they could evaluate in the first five cases.
In this section we will obtain more general reductions using the following integral representation of the function \eqref{f}. 

\begin{proposition}\label{fip}
We have
\begin{equation}\label{fir}F(u,v,w)
=(q^2;q^2)_\infty\oint\frac{(1/z,q^2z;q^2)_\infty(-wz^3;q^6)_\infty}{(-uz;q)_\infty(vz^2;q^4)_\infty}\frac{dz}{2\pi\ti z}, 
\end{equation}
where the integration is over a positively oriented contour separating $0$ from all poles of the integrand.
\end{proposition}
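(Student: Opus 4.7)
The plan is to expand each factor of the integrand as a (Laurent or Taylor) series in $z$ using the classical identities collected in Section~\ref{ps}, multiply the four expansions, and read off the coefficient of $z^0$. First, Jacobi's triple product \eqref{jtp} with $q\mapsto q^2$ and $z\mapsto 1/z$ yields
$$(q^2;q^2)_\infty(1/z,q^2z;q^2)_\infty=\sum_{n\in\mathbb Z}(-1)^nq^{n(n-1)}z^{-n},$$
valid for all $z\neq 0$. The two incarnations of Euler's identity in \eqref{eqe} then give
$$\frac{1}{(-uz;q)_\infty}=\sum_{i=0}^\infty\frac{(-1)^iu^iz^i}{(q;q)_i},\qquad\frac{1}{(vz^2;q^4)_\infty}=\sum_{j=0}^\infty\frac{v^jz^{2j}}{(q^4;q^4)_j},$$
$$(-wz^3;q^6)_\infty=\sum_{k=0}^\infty\frac{q^{3k(k-1)}w^kz^{3k}}{(q^6;q^6)_k}.$$

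Multiplying the four expansions and extracting the $z^0$-coefficient forces $n=i+2j+3k$. The combined sign is $(-1)^{i+2j+3k}\cdot(-1)^i=(-1)^{3k}=(-1)^k$; the combined $q$-exponent is $(i+2j+3k)(i+2j+3k-1)+3k(k-1)$; and the monomial factor is $u^iv^jw^k$. Comparing with \eqref{f}, this is precisely the triple series defining $F(u,v,w)$.

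To turn the formal calculation into a rigorous proof, I would take the contour to be a circle $|z|=r$ with $r$ small enough that $|u|r<1$ and $|v|r^2<1$. Then all four series converge absolutely and uniformly on an open neighbourhood of the contour, the denominator factors $(-uz;q)_\infty$ and $(vz^2;q^4)_\infty$ are nonvanishing on and inside it, and term-by-term integration is justified; this shows that the constant term of the Laurent expansion agrees with the integral, yielding \eqref{fir}. The identity then extends to all relevant $(u,v,w)$ by analytic continuation. There is no substantive obstacle in this argument; the only point that deserves care is the sign bookkeeping indicated above.
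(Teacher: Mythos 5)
Your proof is correct and is essentially the paper's own argument: expand the factors via Euler's identities and Jacobi's triple product on a small contour with $|uz|<1$, $|vz^2|<1$, and extract the constant term, which forces $l=i+2j+3k$ and reproduces the triple sum \eqref{f}. The sign and exponent bookkeeping you carry out matches the paper's computation exactly.
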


\begin{proof}
We may take the contour close enough to zero so that 
$|uz|<1$ and $|vz^2|<1$. By \eqref{eqe} and \eqref{jtp}, the 
right-hand side of \eqref{fir} equals
$$\oint\sum_{i=0}^\infty\frac{(-uz)^i}{(q;q)_i}\sum_{j=0}^\infty\frac{(vz^2)^j}{(q^4;q^4)_j}\sum_{k=0}^\infty\frac{q^{3k(k-1)}(wz^3)^k}{(q^6;q^6)_k}\sum_{l=-\infty}^{\infty}
q^{l(l-1)}(-z^{-1})^l\,\frac{dz}{2\pi\ti z}. $$
The integration picks out the constant term in this Laurent series, that is, the term with $l=i+2j+3k$.
\end{proof}

We will also need the following integral representation of the function
 ${}_2\phi_1$. Although it is an easy consequence of known results, we have not found it in the literature.

\begin{proposition}\label{pirp}
For $|t|<1$, we have the contour integral representation
$${}_2\phi_1\left(\begin{matrix}
a,b\\c
\end{matrix};q, t\right)
=\frac{(q;q)_\infty}{(c,t;q)_\infty}\oint\frac{(abz,cz,qz/t,t/z;q)_\infty}{(az,bz,cz/t;q)_\infty}\,\frac{dz}{2\pi\ti z},
 $$
 where the integral is over a positively oriented contour separating $0$ from all poles of the integrand.
\end{proposition}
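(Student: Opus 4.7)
The strategy is to expand the integrand as an iterated series in $z$ and extract the $z^0$ coefficient, mirroring the proof of Proposition \ref{fip}. First, I would apply Jacobi's triple product \eqref{jtp} to rewrite $(qz/t, t/z;q)_\infty$ as $\frac{1}{(q;q)_\infty}\sum_{n\in\mathbb{Z}}(-1)^n q^{\binom{n}{2}}(t/z)^n$, the $q$-binomial theorem \eqref{qbt} to expand $(abz;q)_\infty/(az;q)_\infty$ and $(cz;q)_\infty/(cz/t;q)_\infty$ as power series in $z$, and Euler's first identity in \eqref{eqe} to expand $1/(bz;q)_\infty$. The contour integration then picks out the constant term in $z$, producing a triple sum over nonnegative integers $m$, $l$, $k$.

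Next, I would perform the $l$-summation using Euler's second identity in \eqref{eqe}, which yields the factor $(btq^{m+k};q)_\infty$. Splitting $(bt;q)_{m+k} = (bt;q)_m(btq^m;q)_k$ and $q^{\binom{m+k}{2}} = q^{\binom{m}{2} + mk + \binom{k}{2}}$, the inner sum over $k$ takes the shape of a ${}_1\phi_1(t; btq^m; q, cq^m)$. Applying Heine's transformation \eqref{iht} in the limit $a \to 0$ converts this into
\[
\frac{(cq^m, t;q)_\infty}{(btq^m;q)_\infty}\sum_{n\geq 0}\frac{(bq^m;q)_n\, t^n}{(q, cq^m;q)_n}.
\]
Substituting back, $(bt;q)_\infty/(btq^m;q)_\infty = (bt;q)_m$ cancels the preexisting $1/(bt;q)_m$, while $(cq^m;q)_\infty = (c;q)_\infty/(c;q)_m$ combines with the surviving $(t;q)_\infty$ to supply the prefactor $(c,t;q)_\infty$, leaving a residual $1/(c;q)_m$.

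At this point, using $(b;q)_m(bq^m;q)_n = (b;q)_{m+n}$ and $(c;q)_m(cq^m;q)_n = (c;q)_{m+n}$ and setting $N = m+n$, the inner $m$-sum becomes $\sum_{m=0}^N \frac{(q;q)_N (-a)^m q^{\binom{m}{2}}}{(q;q)_m (q;q)_{N-m}}$, which evaluates to $(a;q)_N$ by the finite $q$-binomial theorem (a truncation of \eqref{qbt}). The whole expression collapses to $(c,t;q)_\infty \cdot {}_2\phi_1(a,b;c;q,t)/(q;q)_\infty$, which is the proposition.

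The main obstacle is identifying the correct order of summation: summing any single index first does not yield anything in closed form, and attempting instead to close the contour and compute residues leads to three independent ${}_3\phi_2$ series whose cancellation requires a nontrivial three-term identity. Only by first reducing via Euler and then applying the $a \to 0$ limit of Heine's \eqref{iht} does the multiple sum telescope cleanly to the target ${}_2\phi_1$. The interchanges of summation are routine under the stated convergence hypothesis $|t|<1$ (together with $|a|$, $|b|$, $|c/t|$ small enough that all the external poles lie outside the contour), with the general case following by analytic continuation in the parameters.
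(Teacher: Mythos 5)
Your proof is correct --- I checked the constant-term computation through to the end (the $l$-sum via Euler giving $(btq^{m+k};q)_\infty$, the $k$-sum as ${}_1\phi_1(t;btq^m;q,cq^m)$ transformed by the $a\to0$ case of \eqref{iht} into $\frac{(cq^m,t;q)_\infty}{(btq^m;q)_\infty}\,{}_2\phi_1(0,bq^m;cq^m;q,t)$, and the final collapse of the $m$-sum by the terminating $q$-binomial theorem to $(a;q)_N$) --- but it is genuinely different from the paper's argument. The paper does not extract the constant term at all: it introduces an auxiliary factor $(d/z;q)_\infty$ in the denominator, so that the integrand acquires poles $dq^k$ inside the contour, evaluates the integral for $|t|<|d|$ as the sum of those residues via \cite[Eq.\ (4.10.8)]{gr} (a single ${}_3\phi_2$), applies the transformation \cite[(III.9)]{gr} to put the $d$-dependence in a form with a clean limit, analytically continues to $|t|<1$, and lets $d\to0$; this is shorter and incidentally explains the symmetric form of the integrand, but it leans on two external results from \cite{gr}. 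Your route is the one suggested by Proposition \ref{fip}: everything is done with identities already stated in \S\ref{ps} plus the finite $q$-binomial theorem, at the price of a longer multiple-series resummation; it also sidesteps exactly the difficulty you identified, namely that summing residues of the original integrand (without the regularizing $d$) produces three ${}_3\phi_2$'s requiring a three-term relation. One small remark: you do not need to shrink $|a|,|b|,|c/t|$ and continue analytically --- since the contour separates $0$ from all poles, it may be deformed to a sufficiently small circle for any fixed admissible parameters, and all your expansions and interchanges are then justified by the superexponential decay coming from $q^{\binom{m+l+k}{2}}$ together with $|t|<1$ for the final ${}_2\phi_1$.
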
 

Note that the right-hand side displays the $\mathrm S_3\times\mathrm S_2$-symmetry of the
function ${}_2\phi_1$ implied by Heine's transformations \cite[(III.1)--(III.3)]{gr}.
Moreover, it gives the  analytic continuation of the left-hand side
to $t\in\mathbb C\setminus q^{\mathbb Z_{\leq 0}}$. 

\begin{proof}
We start from the more general integral
$$I=\oint\frac{(abz,cz,qz/t,t/z;q)_\infty}{(az,bz,cz/t,d/z;q)_\infty}\,\frac{dz}{2\pi\ti z},$$
which has an additional sequence of poles at $(dq^k)_{k=0}^\infty$. We take $d$ close to zero so that all those poles are inside the contour.  
 If $|t|<|d|$, then by \cite[Eq.\ (4.10.8)]{gr} the 
integral  equals the sum of the residues inside the contour, which can be written
$$\frac{(abd,cd,dq/t,t/d;q)_\infty}{(q,ad,bd,cd/t;q)_\infty}\,{}_3\phi_2\left(\begin{matrix}
ad,bd,cd/t\\abd,cd
\end{matrix};q,\frac td\right). $$
Applying the transformation formula \cite[Eq.\ (III.9)]{gr} gives
$$I=\frac{(abd,dq/t,t,c;q)_\infty}{(q,ad,bd,cd/t;q)_\infty}\,{}_3\phi_2\left(\begin{matrix}
a,b,cd/t\\c,abd
\end{matrix};q,t\right). $$
By analytic continuation, this holds for $|t|<1$. We now obtain the desired result by letting $d\rightarrow 0$.
\end{proof}

Replacing  $z$ by $tz$ and relabelling the parameters gives the equivalent identity
 \begin{multline}\label{tpia} \oint\frac{(\alpha_1z,\alpha_2z,qz,1/z;q)_\infty}{(\beta_1z,\beta_2z,\beta_3z;q)_\infty}\,\frac{dz}{2\pi\ti z}\\
 =\frac{(\beta_1,\alpha_1/\beta_1;q)_\infty}{(q;q)_\infty}\,{}_2\phi_1\left(\begin{matrix}\alpha_2/\beta_2,\alpha_2/\beta_3\\ \beta_1\end{matrix};q, \frac{\alpha_1}{\beta_1}\right),\qquad \alpha_1\alpha_2=\beta_1\beta_2\beta_3.
 \end{multline}
This can be compared with \eqref{fir}, where the integrand can be written
$$\frac{(-w^{1/3}z,-w^{1/3}\om z,-w^{1/3}\om^2z,q^2z,1/z;q^2)_\infty}{(-uz,-uqz,v^{1/2}z,-v^{1/2}z;q^2)_\infty}.
$$
We obtain an integral of the form \eqref{tpia} (with $q$ replaced by $q^2$) if 
$w^{1/3}\in\{u,uq,v^{1/2}\}$ and in addition $w=u^2vq$. That is, there are three one-parameter cases where  $F$ can be reduced to the function ${}_2\phi_1$.

\begin{proposition}\label{rip}
We have the reduction identities
\begin{subequations}\label{ripi}
\begin{align}
 \nonumber F(u,uq^{-1},u^3)&=(-u^{1/2}q^{-1/2},u^{1/2}q^{1/2}\omega^2;q^2)_\infty\\
\label{fra}&\quad\times\,{}_2\phi_1\left(\begin{matrix}
q^{-1}\omega,-u^{1/2}q^{1/2}\omega\\-u^{1/2}q^{-1/2}
\end{matrix};q^2,u^{1/2}q^{1/2}\omega^2 \right),
\\
F(u,uq^2,u^3q^3)&=
\label{frc}(-u,q\omega^2;q^2)_\infty\,{}_2\phi_1\left(\begin{matrix}
u^{1/2}\omega,-u^{1/2}\omega \\-u
\end{matrix};q^2,q\omega^2 \right),\\
F(u,u^4q^2,u^6q^3)&=
\label{fre}(-uq,u\omega^2;q^2)_\infty\,{}_2\phi_1\left(\begin{matrix}
-\omega,uq\omega \\-uq
\end{matrix};q^2,u\omega^2 \right),
\end{align}
\end{subequations}
assuming that the ${}_2\phi_1$-series converge.
\end{proposition}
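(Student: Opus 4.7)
My approach is to rewrite the integrand in \eqref{fir} as a quotient of $q^2$-shifted factorials and then apply \eqref{tpia} with $q$ replaced by $q^2$. The key algebraic input is the cube-root factorization
\begin{equation*}
(-wz^3;q^6)_\infty=(-w^{1/3}z,-w^{1/3}\omega z,-w^{1/3}\omega^2z;q^2)_\infty,
\end{equation*}
which follows from $1+x^3=\prod_{k=0}^{2}(1+\omega^k x)$, combined with the standard splittings $(-uz;q)_\infty=(-uz,-uqz;q^2)_\infty$ and $(vz^2;q^4)_\infty=(v^{1/2}z,-v^{1/2}z;q^2)_\infty$. After these substitutions the integrand has three Pochhammer factors $(\cdot\, z;q^2)_\infty$ on top and four on the bottom, besides the distinguished $(1/z,q^2z;q^2)_\infty$ that matches the one in \eqref{tpia}.

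The point is that the three cases listed in the proposition are exactly those for which the relation $w=u^2vq$ together with a suitable choice of cube root gives $w^{1/3}\in\{u,uq,v^{1/2}\}$: namely $w^{1/3}=u$ for \eqref{fra}, $w^{1/3}=uq$ for \eqref{frc}, and $w^{1/3}=v^{1/2}=u^2q$ for \eqref{fre}. In each case one numerator factor cancels one denominator factor, so that the remaining integrand matches the left-hand side of \eqref{tpia} (with $q\mapsto q^2$), with two upper parameters $\alpha_1,\alpha_2$ and three lower parameters $\beta_1,\beta_2,\beta_3$; the balancing condition $\alpha_1\alpha_2=\beta_1\beta_2\beta_3$ is automatic since both sides equal $w^{2/3}$.

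The last step is to invoke \eqref{tpia} and match parameters. Since \eqref{tpia} is asymmetric in $\beta_1$ (which appears as the lower parameter of the ${}_2\phi_1$ and enters the argument through $\alpha_1/\beta_1$), one must choose which of the three remaining $\beta$'s to single out and which of the two $\alpha$'s to call $\alpha_1$. For \eqref{fra} the matching assignment is $\beta_1=-u^{1/2}q^{-1/2}$, $\alpha_1=-u\omega^2$, $\alpha_2=-u\omega$, $\{\beta_2,\beta_3\}=\{u^{1/2}q^{-1/2},-uq\}$, and completely analogous selections work for \eqref{frc} and \eqref{fre}. I do not anticipate any real obstacle: the convergence hypothesis lets the contour be taken close enough to zero, and the whole argument is a short algebraic verification once the cube-root factorization is spotted. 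The only mildly subtle point is that in \eqref{fre} the cancellation requires choosing compatible branches for $v^{1/2}$ and $w^{1/3}$, but the uncancelled numerator factors are permuted among themselves under the other choices, so the final identity is branch-independent.
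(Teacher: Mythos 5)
Your proposal is correct and follows essentially the same route as the paper: rewrite the integrand of \eqref{fir} in base $q^2$ via the cube-root and square-root factorizations, note that $w=u^2vq$ with $w^{1/3}\in\{u,uq,v^{1/2}\}$ produces exactly the three cancellations bringing the integral to the form \eqref{tpia} with $q\mapsto q^2$, and read off the ${}_2\phi_1$. Your explicit parameter matching (e.g.\ $\beta_1=-u^{1/2}q^{-1/2}$, $\alpha_1=-u\omega^2$ for \eqref{fra}) is consistent with the stated identities, so no gap remains.
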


Applying
\eqref{iht} to \eqref{tpia} gives the equivalent identity
 \begin{multline}\label{tpic} \oint\frac{(\alpha_1z,\alpha_2z,qz,1/z;q)_\infty}{(\beta_1z,\beta_2z,\beta_3z;q)_\infty}\,\frac{dz}{2\pi\ti z}\\
 =\frac{(\beta_2,\beta_3;q)_\infty}{(q;q)_\infty}\,{}_2\phi_2\left(\begin{matrix}\alpha_1/\beta_1,\alpha_2/\beta_1\\ \beta_2,\beta_3\end{matrix};q, \beta_1\right),\qquad \alpha_1\alpha_2=\beta_1\beta_2\beta_3.
 \end{multline}
This leads to
 alternative representations for \eqref{ripi} as ${}_2\phi_2$-series, which have infinite radius of convergence and hence hold for all $u$.

The Kanade--Russell conjectures \eqref{kca}--\eqref{kcd}
and \eqref{fsa}--\eqref{fsc} are all reduced to ${}_2\phi_1$-evaluations
by Proposition \ref{rip}. In the remaining three conjectures, \eqref{kce} and \eqref{fsb}--\eqref{krx},
the variables in $F(u,v,w)$ satisfy $w=u^2vq^3$ rather than $w=u^2vq$.
They can be expressed as integrals of the form 
\eqref{tpia} with $\alpha_1\alpha_2=\beta_1\beta_2\beta_3q$. It is easy to see that any such integral can be expressed (in several different ways) as the sum of two integrals 
with $\alpha_1\alpha_2=\beta_1\beta_2\beta_3$, and hence as a sum of two ${}_2\phi_1$-series. If  $\beta_3=-\beta_2$,  there is a more elegant  expression as a  ${}_2\phi_2$-series.

\begin{lemma}\label{crl}
When  $\alpha_1\alpha_2=-\beta_1\beta_2^2q$, then
$$\oint\frac{(\alpha_1z,\alpha_2z,qz,1/z;q)_\infty}{(\beta_1z,\beta_2z,-\beta_2z;q)_\infty}\,\frac{dz}{2\pi\ti z}
 =\frac{(\beta_2^2q^2;q^2)_\infty}{(q;q)_\infty}\,{}_2\phi_2\left(\begin{matrix}
 \alpha_1/\beta_1,\alpha_2/\beta_1
 \\ \beta_2q,-\beta_2q\end{matrix};q, \beta_1\right).$$
\end{lemma}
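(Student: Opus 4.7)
My plan is to follow the strategy suggested in the paragraph preceding the lemma: split the integrand so that each piece satisfies the balancedness condition of \eqref{tpic}, apply \eqref{tpic} to each piece, and then recombine the resulting ${}_2\phi_2$-series. The key splitting I would use is
\begin{equation*}
\frac{2}{(\beta_2 z, -\beta_2 z; q)_\infty} = \frac{1}{(\beta_2 z, -\beta_2 q z; q)_\infty} + \frac{1}{(\beta_2 q z, -\beta_2 z; q)_\infty},
\end{equation*}
which follows by pulling $(\beta_2 q z, -\beta_2 q z; q)_\infty$ out of every denominator and invoking $(1 - \beta_2 z) + (1 + \beta_2 z) = 2$. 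Substituting this into the given integral writes it as $\tfrac12(I_1 + I_2)$, where $I_1$ and $I_2$ have denominators $(\beta_1 z, \beta_2 z, -\beta_2 q z; q)_\infty$ and $(\beta_1 z, \beta_2 q z, -\beta_2 z; q)_\infty$ respectively. In both cases the product of the three lower parameters is $-\beta_1 \beta_2^2 q$, which by hypothesis equals $\alpha_1 \alpha_2$, so the balancedness condition of \eqref{tpic} is met.

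Applying \eqref{tpic} then yields two ${}_2\phi_2$-series with common upper parameters $\alpha_1/\beta_1, \alpha_2/\beta_1$ and argument $\beta_1$, but with lower parameters $\{\beta_2, -\beta_2 q\}$ and $\{\beta_2 q, -\beta_2\}$ respectively. Expanding both as sums and using $(a; q)_\infty/(a; q)_n = (a q^n; q)_\infty$, the $n$-th term of $I_1 + I_2$ contains the factor
\begin{equation*}
(\beta_2 q^n, -\beta_2 q^{n+1}; q)_\infty + (\beta_2 q^{n+1}, -\beta_2 q^n; q)_\infty = 2\,(\beta_2 q^{n+1}, -\beta_2 q^{n+1}; q)_\infty = 2\,(\beta_2^2 q^{2n+2}; q^2)_\infty,
\end{equation*}
using again $(1 - \beta_2 q^n) + (1 + \beta_2 q^n) = 2$ together with $(a, -a; q)_\infty = (a^2; q^2)_\infty$. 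The identity $(\beta_2 q, -\beta_2 q; q)_n = (\beta_2^2 q^2; q^2)_n$ then rewrites $(\beta_2^2 q^{2n+2}; q^2)_\infty = (\beta_2^2 q^2; q^2)_\infty / (\beta_2 q, -\beta_2 q; q)_n$, and the combined series collapses precisely into the claimed ${}_2\phi_2$-series.

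The argument is essentially mechanical once the correct splitting is found. The only point requiring care is the coincidence that the shift $\beta_3 \mapsto \beta_3 q$ applied to either of the two $\beta_2$-factors produces the extra factor of $q$ demanded by the hypothesis $\alpha_1 \alpha_2 = -\beta_1 \beta_2^2 q$; the symmetry $\beta_2 \leftrightarrow -\beta_2$ is what makes both shifted integrals balanced simultaneously, and this is ultimately why the result can be expressed as a single ${}_2\phi_2$ rather than as a sum of two.
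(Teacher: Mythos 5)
Your proposal is correct and follows essentially the same route as the paper: the partial-fraction splitting of $1/(\beta_2 z,-\beta_2 z;q)_\infty$ is just the paper's insertion of $1=\tfrac12\bigl((1+\beta_2 z)+(1-\beta_2 z)\bigr)$, followed by the same application of \eqref{tpic} to each piece and the same termwise recombination using $(1-\beta_2 q^n)+(1+\beta_2 q^n)=2$ and $(\beta_2 q,-\beta_2 q;q)_n=(\beta_2^2q^2;q^2)_n$. No gaps; the argument matches the paper's proof.
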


\begin{proof}
Let $I$ denote the given integral.
Inserting the factor
$$1=\frac{(1+\beta_2z)+(1-\beta_2z)}{2}, $$
it splits as a sum, where we may evaluate each term using \eqref{tpic}.
This gives
\begin{align*}I&=\frac{1}{2}\oint\frac{(\alpha_1z,\alpha_2z,qz,1/z;q)_\infty}{(\beta_1z,\beta_2z,-\beta_2qz;q)_\infty}\,\frac{dz}{2\pi\ti z}
+\frac{1}{2}\oint\frac{(\alpha_1z,\alpha_2z,qz,1/z;q)_\infty}{(\beta_1z,\beta_2qz,-\beta_2z;q)_\infty}\,\frac{dz}{2\pi\ti z}\\
&=\frac{1}{2(q;q)_\infty}
\left((\beta_2,-\beta_2q;q)_\infty\,{}_2\phi_2\left(\begin{matrix}\alpha_1/\beta_1,\alpha_2/\beta_1\\ \beta_2,-\beta_2q\end{matrix};q, {\beta_1}\right)\right.\\
&\quad\left.+
(\beta_2q,-\beta_2;q)_\infty\,{}_2\phi_2\left(\begin{matrix}\alpha_1/\beta_1,\alpha_2/\beta_1\\ \beta_2q,-\beta_2\end{matrix};q, {\beta_1}\right).
\right)
\end{align*}
Adding the series termwise gives
$$I=\frac{(\beta_2^2q^2;q^2)_\infty}{(q;q)_\infty}
\sum_{k=0}^\infty\frac{(\alpha_1/\beta_1,\alpha_2/\beta_1;q)_k}{(\beta_2q,-\beta_2q;q)_k}\,(-1)^kq^{\binom k2}\beta_1^k\frac{(1-\beta_2q^k)+(1+\beta_2q^k)}{2},
 $$
which simplifies to
  the desired expression.

\end{proof}

Combining Proposition \ref{fip} and Lemma \ref{crl} gives the following reduction identities. Note that \eqref{kce} and \eqref{fsb} are series of the form \eqref{f32} whereas
\eqref{krx} is of the form \eqref{far}.

\begin{proposition}\label{crp}
We have 
\begin{subequations}\label{crpi}
\begin{align}
\label{f32}F(u,u,u^3q^3)&=
(uq^4;q^4)_\infty\,{}_2\phi_2\left(\begin{matrix}
q\omega,q\om^2\\u^{1/2}q^{2},-u^{1/2}q^{2}
\end{matrix};q^2,-u \right),\\
\label{far}F(u,uq^{-3},u^3)&=(uq;q^4)_\infty\,{}_2\phi_2\left(\begin{matrix}
q^{-1}\omega,q^{-1}\om^2\\u^{1/2}q^{1/2},-u^{1/2}q^{1/2}
\end{matrix};q^2,-uq \right).
\end{align}
\end{subequations}
\end{proposition}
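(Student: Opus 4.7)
The plan is to combine the integral representation of Proposition \ref{fip} with Lemma \ref{crl} applied at base $q^2$, after factoring the cubic and quadratic Pochhammer symbols in the integrand. In both identities the parameters $(u,v,w)$ have been chosen so that, after one cancellation, the integrand fits exactly the template of Lemma \ref{crl}.

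First I would use the factorizations
\[
(-wz^3;q^6)_\infty=(-w^{1/3}z,-w^{1/3}\om z,-w^{1/3}\om^2 z;q^2)_\infty,\quad (-uz;q)_\infty=(-uz,-uqz;q^2)_\infty,
\]
\[
(vz^2;q^4)_\infty=(v^{1/2}z,-v^{1/2}z;q^2)_\infty,
\]
to rewrite the integrand in Proposition \ref{fip} in the form already displayed just before Proposition \ref{rip}. For \eqref{f32} one takes $v=u$, $w=u^3q^3$, so that $w^{1/3}=uq$, and the factor $(-uqz;q^2)_\infty$ cancels between numerator and denominator, leaving an integrand of the shape treated by Lemma \ref{crl} (with $q$ replaced by $q^2$) with $\alpha_1=-uq\om$, $\alpha_2=-uq\om^2$, $\beta_1=-u$, $\beta_2=u^{1/2}$; one checks $\alpha_1\alpha_2=u^2q^2=-\beta_1\beta_2^2q^2$, so the hypothesis of the lemma is met. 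Applying Lemma \ref{crl} then produces the right-hand side of \eqref{f32}, the prefactor $(q^2;q^2)_\infty$ from Proposition \ref{fip} combining with $1/(q^2;q^2)_\infty$ to leave exactly $(uq^4;q^4)_\infty$.

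For \eqref{far} one takes $v=uq^{-3}$, $w=u^3$, so $w^{1/3}=u$, and the factor $(-uz;q^2)_\infty$ cancels. This reduces the integrand to the same template with $\alpha_1=-u\om$, $\alpha_2=-u\om^2$, $\beta_1=-uq$, $\beta_2=u^{1/2}q^{-3/2}$. The balancing condition becomes $\alpha_1\alpha_2=u^2=-\beta_1\beta_2^2q^2$, which holds. Lemma \ref{crl} then yields a ${}_2\phi_2$-series with lower parameters $u^{1/2}q^{1/2},-u^{1/2}q^{1/2}$, argument $-uq$, and prefactor $(\beta_2^2q^4;q^4)_\infty=(uq;q^4)_\infty$, matching \eqref{far} exactly.

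There is essentially no obstacle: everything is bookkeeping, and the only substantive point is the verification of the balancing $\alpha_1\alpha_2=-\beta_1\beta_2^2q^2$ in each case, which is precisely the condition $w=u^2vq^3$ noted in the paragraph preceding Lemma \ref{crl}. The mild subtlety to flag is the choice of cube root $w^{1/3}$ used in the factorization of $(-wz^3;q^6)_\infty$: different choices permute the three numerator factors by multiplication by $\om$, so the cancellation with $(-uqz;q^2)_\infty$ or $(-uz;q^2)_\infty$ singles out the correct root, and this justifies the specific form of the resulting ${}_2\phi_2$-series.
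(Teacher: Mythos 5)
Your proposal is correct and is exactly the paper's proof: the paper simply states that Proposition \ref{fip} combined with Lemma \ref{crl} (in base $q^2$, after the cancellation forced by the choice $w^{1/3}=uq$ resp.\ $w^{1/3}=u$) gives \eqref{f32} and \eqref{far}, and your parameter identifications and balancing checks fill in that bookkeeping correctly.
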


\section{Results of Bringmann, Jennings-Shaffer and Mahlburg}
\label{bjms}

For completeness, we show how to prove the conjectures
 \eqref{kca}--\eqref{kce} in our approach. 
 Bringmann et al.\ \cite{bjm} also obtain these results 
 by reducing triple summations to single summations, which are either the same as ours or easily seen to be equivalent. The main virtue of our approach is the more streamlined way to obtain such reductions as explained in \S \ref{rtss}.
 
 \begin{proof}[Proof of \eqref{kca}]
If we let $u=q$ in \eqref{fra} we get
$$F(q,1,q^3) =(-1,q\omega^2;q^2)_\infty\,{}_2\phi_1\left(\begin{matrix}
q^{-1}\omega,-q\omega\\-1
\end{matrix};q^2,q\omega^2 \right)=(-q^2;q^2)_\infty(q\om,q\om^2;q^4)_\infty,$$
where we used the Bailey--Daum summation \eqref{bds}. This can be rewritten as \eqref{kca}.
\end{proof}

\begin{proof}[Proof of \eqref{kcb}]
If we let  $u=q^2$ in \eqref{frc}, we get
\begin{align*}F(q^2,q^4,q^9)&=(-q^2,q\om^2;q^2)_\infty\,{}_2\phi_1\left(\begin{matrix}
q\omega,-q\omega\\-q^2
\end{matrix};q^2,q\omega^2 \right)\\
&
=(-q^2;q^2)_\infty(q^3\om,q^3\om^2;q^4)_\infty, \end{align*}
by either \eqref{aqbt} or  \eqref{bds}. This can be written as \eqref{kcb}.
\end{proof}

\begin{proof}[Proof of \eqref{kcc}]
If we let $u=q^4$ in \eqref{frc}, we get
$$F(q^4,q^6,q^{15})=(-q^4,q\om^2;q^2)_\infty\,{}_2\phi_1\left(\begin{matrix}
q^2\omega,-q^2\omega\\-q^4
\end{matrix};q^2,q\omega^2 \right). $$
This ${}_2\phi_1$ is contiguous to a ${}_1\phi_0$ in base $q^4$. More precisely, the terms can be expressed as
$$\frac{(q^2\om,-q^2\om;q^2)_k}{(q^2,-q^4;q^2)_k}(q\om^2)^k
=\frac{\om(1+q^2)}{(1-\om^2)q}\frac{(\om^2;q^4)_{k+1}}{(q^4;q^4)_{k+1}}(q\om^2)^k(1-q^{2k}).
 $$
 Replacing $k$ by $k-1$ we find that
\begin{align*}F(q^4,q^6,q^{15})&=\frac{\om(-q^2,q\om^2;q^2)_\infty}{(1-\om^2) q}\Big({}_1\phi_0\left(
\omega^2;q^4,q\omega^2 \right)-\,{}_1\phi_0\left(
\omega^2;q^4,q^3\omega^2 \right)\Big)\\
&=\frac{\om(-q^2;q^2)_\infty}{(1-\om^2) q}\Big((q\om,q^3\om^2;q^4)_\infty
-(q\om^2,q^3\om;q^4)_\infty
\Big),
\end{align*}
where we used the $q$-binomial theorem \eqref{qbt}.
Applying \eqref{wqa} and simplifying we get \eqref{kcc}.
\end{proof}

\begin{proof}[Proof of \eqref{kcd}]
If we let $u=q$ in \eqref{fre} we get
$$ F(q,q^6,q^{9})=(-q^2,q\om^2;q^2)_\infty\,{}_2\phi_1\left(\begin{matrix}
q^2\omega,-\omega\\-q^2
\end{matrix};q^2,q\omega^2 \right). $$
This time we write the terms as
$$\frac{(q^2\om,-\om;q^2)_k}{(q^2,-q^2;q^2)_k}(q\om^2)^k
=\frac{1}{1-\om}\frac{(\om^2;q^4)_{k}}{(q^4;q^4)_{k}}
(q\om^2)^{k}(1-\om q^{2k}), $$
which gives
\begin{align*}F(q,q^6,q^{9})&=
\frac{(-q^2,q\om^2;q^2)_\infty}{1-\om}\Big({}_1\phi_0\left(
\omega^2;q^4,q\omega^2 \right)-\om\,{}_1\phi_0\left(
\omega^2;q^4,q^3\omega^2 \right)\Big)\\
 &=\frac{(-q^2;q^2)_\infty}{(1-\om)}\Big((q\om,q^3\om^2;q^4)_\infty
-\om(q\om^2,q^3\om;q^4)_\infty
\Big).
 \end{align*}
 Applying \eqref{wqb} we arrive at \eqref{kcd}. 
 \end{proof}
 
 \begin{proof}[Proof of \eqref{kce}]
 Although $F(q^2,q^2,q^9)$ is a special case of \eqref{f32}, we do not use that identity. Instead, we 
 start from
 \eqref{fir}, which gives
 $$F(q^2,q^2,q^9)=(q^2;q^2)_\infty\oint\frac{(-q^3\om z,-q^3\om^2 z,1/z,q^2z;q^2)_\infty}{(qz,-qz,-q^2z;q^2)_\infty}\frac{dz}{2\pi\ti z}. $$
 Inserting the factor
 $$1=\frac{(1+q\om z)-\om^2(1+q\om^2z)}{1-\om^2} $$
 gives
 \begin{align*}F(q^2,q^2,q^9)&=\frac{(q^2;q^2)_\infty}{1-\om^2}\left(\oint\frac{(-q\om z,-q^3\om^2z,1/z,q^2z;q^2)_\infty}{(qz,-qz,-q^2z;q^2)_\infty}\frac{dz}{2\pi\ti z}\right.\\
&\qquad \left. -\om^2
 \oint\frac{(-q^3 \om z,-q\om^2 z,1/z,q^2z;q^2)_\infty}{(qz,-qz,-q^2z;q^2)_\infty}\frac{dz}{2\pi\ti z}\right)\\
& =\frac{(-q^2;q^2)_\infty}{1-\om^2}\left((q\om^2;q^2)_\infty\,
 {}_2\phi_1\left(\begin{matrix}
\omega,-\om\\-q^{2}
\end{matrix};q^2,q\om^2 \right)\right.\\
&\qquad\left.-\om^2(q\om;q^2)_\infty\,
 {}_2\phi_1\left(\begin{matrix}
\omega^2,-\om^2\\-q^{2}
\end{matrix};q^2,q\om \right)
 \right)\\
 &=\frac{(-q^2;q^2)_\infty}{1-\om^2}\left((q\om,q^3\om^2;q^4)_\infty
 -\om^2(q\om^2,q^3\om;q^4)_\infty
 \right),
\end{align*}
 where we used first \eqref{tpia} and then \eqref{aqbt}. Applying
\eqref{wqc} we arrive at \eqref{kce}. 
\end{proof}

\section{Other types of multiple series}
\label{omss}

The integral \eqref{tpia} can be used to study other types of multiple series apart from $F$. We will give some examples that seem very close to the conjectures \eqref{krai}. We first observe that  \eqref{tpia} can be evaluated 
  by \eqref{aqbt} (or, alternatively, \eqref{bds})  whenever
   $\beta_1=-q$ and $\beta_2=-\beta_3$. 
    Indeed, assuming also $|\alpha_1|<q$   we have
\begin{multline}\label{bdi}  \oint\frac{(\alpha_1z,\alpha_2z,qz,1/z;q)_\infty}{(-qz,\beta_2z,-\beta_2z;q)_\infty}\,\frac{dz}{2\pi\ti z}\\
\begin{split}& =\frac{(-q,-\alpha_1q^{-1};q)_\infty}{(q;q)_\infty}\,{}_2\phi_1\left(\begin{matrix}\alpha_2/\beta_2,-\alpha_2/\beta_2\\ -q\end{matrix};q, -\alpha_1q^{-1}\right)\\
 &
 =\frac{(-q;q)_\infty(-\alpha_1,-\alpha_2;q^2)_\infty}{(q;q)_\infty},\qquad \alpha_1\alpha_2=\beta_2^2q.\end{split}
 \end{multline}
 By analytic continuation,
 the final expression  is valid  for general values of $\alpha_1$. 
 
 We will only consider \eqref{bdi} in the special case
 $\alpha_2=\omega\alpha_1$. Replacing $q$ by $q^2$, it can be written
\begin{equation}\label{tsi}\oint\frac{(-u^3q^3z^3;q^6)_\infty(q^2z,1/z;q^2)_\infty}{(-q^2z,uz,-uz,-uqz;q^2)_\infty}\,\frac{dz}{2\pi\ti z}
 =\frac{(-q^2;q^2)_\infty( u^3q^3;q^{12})_\infty}{(q^2;q^2)_\infty(uq;q^4)_\infty}. \end{equation}
 Let  us write
 $$f(u) =(-q^2z,uz,-uz,-uqz;q^2)_\infty$$
 for the denominator  on the left-hand side. 
 We may obtain non-trivial summations by combining factors from $f$ before expanding the integrand as a Laurent series.
 The most compact expression is obtained for $u=-q$, since
 $$f(-q)=(qz,-qz,q^2z,-q^2z;q^2)_\infty=(q^2z^2;q^2)_\infty. $$
 By \eqref{eqe} and \eqref{jtp}, the left-hand side of \eqref{tsi} then takes the form
 \begin{multline*} \oint\frac{(q^6z^3;q^6)_\infty(q^2z,1/z;q^2)_\infty}{(q^2z^2;q^2)_\infty}\,\frac{dz}{2\pi\ti z}\\
 \begin{split}&=\frac 1{(q^2;q^2)_\infty}\sum_{j,k=0}^\infty\sum_{l=-\infty}^\infty
 \frac{(-1)^{k+l}q^{2j+6\binom k2+6k+2\binom l2}}{(q^2;q^2)_j(q^6;q^6)_k}
 \oint z^{2j+3k-l}\frac{dz}{2\pi\ti z}\\
& =\frac 1{(q^2;q^2)_\infty}\sum_{j,k=0}^\infty
 \frac{q^{4j^2+12jk+12k^2}}{(q^2;q^2)_j(q^6;q^6)_k}.
\end{split} \end{multline*}
 After replacing $q^2$ by $q$ we obtain the following double summation.
 It was shown in \cite{kr} and \cite{k} that it is equivalent to a partition identity 
   conjectured by Capparelli \cite{c} and first proved by Andrews \cite{a}. 
   We now have a new proof of this result, based on Proposition \ref{pirp}.

 \begin{proposition}[Andrews, Kanade--Russell, Kur\c{s}ung\"oz]
 We have
$$ \sum_{j,k=0}^{\infty}\frac{q^{2j^2+6jk+6k^2}}{(q;q)_{j}(q^3;q^3)_k}=\frac{1}{(q^3;q^6)_\infty(q^{2},q^{10};q^{12})_\infty}.$$
 \end{proposition}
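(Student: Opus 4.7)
The plan is to specialize the integral identity \eqref{tsi} at the single value $u=-q$. This value is distinguished because at $u=-q$ the four factors $(-q^2z,uz,-uz,-uqz;q^2)_\infty$ in the denominator of the integrand collapse into the single factor $(q^2z^2;q^2)_\infty$, which makes the Laurent expansion of the integrand around $z=0$ tractable.

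After the substitution I would expand the three remaining factors of the integrand as explicit Laurent series in $z$: the first Euler identity in \eqref{eqe} gives $1/(q^2z^2;q^2)_\infty=\sum_{j\geq 0} q^{2j}z^{2j}/(q^2;q^2)_j$; the second Euler identity gives $(q^6z^3;q^6)_\infty = \sum_{k\geq 0}(-1)^k q^{6\binom{k}{2}+6k} z^{3k}/(q^6;q^6)_k$; and Jacobi's triple product \eqref{jtp} gives $(q^2z,1/z;q^2)_\infty = (q^2;q^2)_\infty^{-1}\sum_{l\in\mathbb Z}(-1)^l q^{l(l-1)}z^{-l}$. Extracting the constant term in $z$ forces $l=2j+3k$, and a direct check of the exponent of $q$ (using $2j+6\binom{k}{2}+6k+(2j+3k)(2j+3k-1)=4j^2+12jk+12k^2$) together with the sign evaluation $(-1)^{k+l}=(-1)^{2j+4k}=1$ produces
\[\frac{1}{(q^2;q^2)_\infty}\sum_{j,k\geq 0}\frac{q^{4j^2+12jk+12k^2}}{(q^2;q^2)_j(q^6;q^6)_k}.\]
All bases and exponents that appear are even, so the relabeling $q^2\mapsto q$ is harmless and yields the left-hand side of the proposition.

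Next I would simplify the right-hand side of \eqref{tsi} at $u=-q$, which reads $(-q^2;q^2)_\infty(-q^6;q^{12})_\infty/\bigl((q^2;q^2)_\infty(-q^2;q^4)_\infty\bigr)$. After dividing out $(q^2;q^2)_\infty$ and substituting $q^2\mapsto q$, one is left with $(-q;q)_\infty(-q^3;q^6)_\infty/(-q;q^2)_\infty$. The elementary identities $(-q;q)_\infty/(-q;q^2)_\infty=(-q^2;q^2)_\infty=1/(q^2;q^4)_\infty$ and $(-q^3;q^6)_\infty=(q^6;q^{12})_\infty/(q^3;q^6)_\infty$, combined with the mod-$12$ factorization $(q^2;q^4)_\infty=(q^2,q^6,q^{10};q^{12})_\infty$, cause the factor $(q^6;q^{12})_\infty$ to cancel and leave precisely $1/\bigl((q^3;q^6)_\infty(q^2,q^{10};q^{12})_\infty\bigr)$.

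Since the one nontrivial analytic input \eqref{tsi} is already in hand as the Bailey--Daum specialization of \eqref{tpia}, the main obstacle here is purely bookkeeping: correctly tallying the quadratic exponent at $l=2j+3k$ and pushing the product side through the mod-$12$ manipulations. Both steps are elementary, so I expect no genuine difficulty beyond careful computation; the real content of the theorem is the choice $u=-q$, which is the unique specialization of $u$ at which $f(u)$ degenerates to a perfect square.
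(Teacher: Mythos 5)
Your proposal is correct and coincides with the paper's own argument: the paper likewise specializes \eqref{tsi} at $u=-q$ (where $f(-q)=(q^2z^2;q^2)_\infty$), expands via \eqref{eqe} and \eqref{jtp}, extracts the constant term at $l=2j+3k$ to get the exponent $4j^2+12jk+12k^2$, and then rescales $q^2\mapsto q$ and simplifies the product side to $1/\bigl((q^3;q^6)_\infty(q^2,q^{10};q^{12})_\infty\bigr)$. Your exponent tally and the mod-$12$ product manipulations check out, so there is nothing to add.
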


If we can write $f(u)$ as the product of two factors of the form $(az^k;\pm q^l)_\infty$, we obtain in the same way a triple summation. 
In \S \ref{bjms} we obtained  \eqref{kca} from the  factorization
$$ f(1)=(z,-z,-qz,-q^2z;q^2)_\infty=(-qz;q)_\infty(z^2;q^4)_\infty$$
and \eqref{kcb} from
$$f(q^2)=(q^2z,-q^2z,-q^2z,-q^3z;q^2)_\infty=(-q^2z;q)_\infty(q^4z^2;q^4)_\infty. $$
There are essentially 
three other  cases when this can be done, namely,
 \begin{align*}f(-q^3)&=(-q^2z,q^3z,-q^3z,q^4z)=
  (-q^2z;q)_\infty(q^3z;q)_\infty,\\
  f(q)&=(qz,-qz,-q^2z,-q^2z;q^2)_\infty=(-qz;q)_\infty(qz;-q)_\infty,\\
  f(q^3)&=(-q^2z,q^3z,-q^3z,-q^4z;q^2)_\infty=(-q^2z;q)_\infty(q^3z;-q)_\infty. 
  \end{align*}
  The corresponding three summations are as follows.
To our knowledge, they are new. 
We do not know whether they have natural interpretations as partition identities,
  nor whether they are related to Lie algebras.
 
 \begin{theorem}
 We have
 \begin{subequations}
 \begin{align}
 \label{tsf}
 \sum_{i,j,k=0}^{\infty}\frac{(-1)^{j}q^{2\binom{i+j+3k}{2}+6\binom k2+2i+3j+12k}}{(q;q)_i(q;q)_{j}(q^6;q^6)_k}&=\frac{(q^4,q^{20};q^{24})_\infty}{(q^2;q^{4})_\infty},\\
 \label{tsc}
 \sum_{i,j,k=0}^{\infty}\frac{(-1)^{j+k}q^{2\binom{i+j+3k}{2}+6\binom k2+i+j+6k}}{(q;q)_{i}(-q;-q)_j(q^6;q^6)_k}&=\frac{1}{(q^2;q^4)_\infty(q^2,q^{10};q^{12})_\infty}, \\
  \label{tse}
 \sum_{i,j,k=0}^{\infty}\frac{(-1)^{j+k}q^{2\binom{i+j+3k}{2}+6\binom k2+2i+3j+12k}}{(q;q)_i(-q;-q)_{j}(q^6;q^6)_k}&=\frac{1}{(q^2;q^{4})_\infty(q^4,q^8;q^{12})_\infty}.
 \end{align}
 \end{subequations}
 \end{theorem}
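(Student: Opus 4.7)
The plan is to specialize the contour integral identity \eqref{tsi} at the three values $u=-q^3$, $u=q$ and $u=q^3$ corresponding to the three factorizations of $f(u)$ displayed just before the theorem, and to read off the constant term of the resulting Laurent expansion. This is precisely the template already used in the section to derive the Capparelli identity from $u=-q$, with the single difference that $f(u)$ now splits into a product of two Pochhammer symbols rather than one, so the Laurent expansion will carry one additional summation index, matching the triple-sum structure on the left-hand side.

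For each choice of $u$, I would expand the two factors of $f(u)$ in the denominator of the integrand via the first Euler identity in \eqref{eqe}, producing sums indexed by $i,j\geq 0$; expand $(-u^3q^3z^3;q^6)_\infty$ via the second Euler identity, producing a sum indexed by $k\geq 0$; and rewrite $(q^2;q^2)_\infty(q^2z,1/z;q^2)_\infty$ as a bilateral sum over $l\in\mathbb Z$ by Jacobi's triple product \eqref{jtp} (with $q\mapsto q^2$ and $z\mapsto 1/z$). Termwise contour integration selects the constant term $l=i+j+3k$, collapsing the quadruple sum to a triple sum over $(i,j,k)$. A routine bookkeeping of signs then yields $(-1)^j$ for \eqref{tsf} and $(-1)^{j+k}$ for \eqref{tsc} and \eqref{tse} (since $l=i+j+3k$ turns $(-1)^{i+k+l}$ into $(-1)^j$ in the first case and $(-1)^{i+l}$ into $(-1)^{j+k}$ in the others), while the $q$-exponents assemble into $2\binom{i+j+3k}{2}+6\binom k2$ together with the linear correction prescribed by the theorem; in each case this reproduces exactly the stated left-hand side.

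On the product side, the right-hand side of \eqref{tsi} at the three specialized values, after cancellation of the prefactor $1/(q^2;q^2)_\infty$ coming from the Jacobi triple product step, equals
\[
\frac{(-q^2;q^2)_\infty(-q^{12};q^{12})_\infty}{(-q^4;q^4)_\infty},\qquad
\frac{(-q^2;q^2)_\infty(q^6;q^{12})_\infty}{(q^2;q^4)_\infty},\qquad
\frac{(-q^2;q^2)_\infty(q^{12};q^{12})_\infty}{(q^4;q^4)_\infty},
\]
respectively. Each of these should be rewritten into the form stated in \eqref{tsf}--\eqref{tse} by elementary Pochhammer manipulations, the main tool being the identity $(-q^2;q^2)_\infty=1/(q^2;q^4)_\infty$ (a direct consequence of $(a;q)_\infty(-a;q)_\infty=(a^2;q^2)_\infty$) together with the base-splitting $(q^4;q^4)_\infty=(q^4,q^8,q^{12};q^{12})_\infty$.

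I do not anticipate any serious obstacle: the integral representation \eqref{tsi} and the three pre-computed factorizations of $f(u)$ do essentially all the work, and the sign/exponent bookkeeping is mechanical. The only step requiring a moment of care is the product-side rewriting in case \eqref{tsf}, where base $q^{24}$ appears in the target and one additionally needs the identity $(-q^{12};q^{12})_\infty=1/(q^{12};q^{24})_\infty$ together with $(q^4;q^8)_\infty=(q^4,q^{12},q^{20};q^{24})_\infty$; for \eqref{tsc} and \eqref{tse} only base $q^{12}$ is involved and the rewriting is even quicker.
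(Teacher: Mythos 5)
Your proposal is correct and is essentially the paper's own argument: the paper proves \eqref{tsf}--\eqref{tse} precisely by specializing \eqref{tsi} at $u=-q^3,\,q,\,q^3$, using the displayed factorizations of $f(u)$ and repeating the Euler/Jacobi-triple-product constant-term extraction from the Capparelli case, with the same product-side Pochhammer simplifications you describe. Your sign and exponent bookkeeping and the three simplified right-hand sides all check out, so nothing is missing.
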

 
We have found one more result for the type of triple series 
appearing in \eqref{tsc}--\eqref{tse}. The proof, which is parallel  to that of
 \eqref{fsa}, is given in \S \ref{ncs}.

 \begin{theorem}\label{ntst}
 We have 
 \begin{multline}\label{ntss}\sum_{i,j,k=0}^{\infty}\frac{(-1)^{j+k}q^{2\binom{i+j+3k}{2}+6\binom k2+2i+\frac 32j+9k}}{(q;q)_i(-q;-q)_{j}(q^6;q^6)_k}\\
 =\frac{1}{(-q^{3/2},q^{5/2};-q^3)_\infty(q^4;q^6)_\infty(q^2;q^{12})_\infty}.
 \end{multline}
 \end{theorem}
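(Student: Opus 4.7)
My plan is to mimic the proof of \eqref{fsa}: first reduce the left-hand side of \eqref{ntss} to a one-variable ${}_2\phi_1$-series in the spirit of Proposition \ref{rip}, then evaluate it using the Askey--Wilson and Rogers polynomial techniques of \S\ref{ncs}. I first introduce the auxiliary triple series
\[G(u,v,w)=\sum_{i,j,k=0}^\infty\frac{(-1)^k u^iv^jw^kq^{(i+j+3k)(i+j+3k-1)+3k(k-1)}}{(q;q)_i(-q;-q)_j(q^6;q^6)_k},\]
so that the left-hand side of \eqref{ntss} is precisely $G(q^2,-q^{3/2},q^9)$. Imitating the proof of Proposition \ref{fip} --- expanding each denominator factor by Euler, the numerator $(-wz^3;q^6)_\infty$ by the second Euler identity, and using Jacobi's triple product to extract the constant term in the resulting Laurent series --- one arrives at the integral representation
\[G(u,v,w)=(q^2;q^2)_\infty\oint\frac{(1/z,q^2z;q^2)_\infty(-wz^3;q^6)_\infty}{(-uz;q)_\infty(-vz;-q)_\infty}\,\frac{dz}{2\pi\ti z}.\]

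For $u=q^2$, $v=-q^{3/2}$, $w=q^9$ two coincidences now drive the reduction. Since $w=u^3q^3$, picking the cube root $w^{1/3}=uq$ gives $(-wz^3;q^6)_\infty=(-uqz,-uq\om z,-uq\om^2z;q^2)_\infty$; combined with the splittings $(-uz;q)_\infty=(-uz,-uqz;q^2)_\infty$ and $(-vz;-q)_\infty=(-vz,vqz;q^2)_\infty$, the factor $(-uqz;q^2)_\infty$ cancels between numerator and denominator. Since $v^2=uq$ as well, the remaining integrand satisfies the balancing condition $\alpha_1\alpha_2=\beta_1\beta_2\beta_3$ of \eqref{tpia} (with $q$ replaced by $q^2$). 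Applying that identity yields the reduction
\[G(q^2,-q^{3/2},q^9)=(-q^2,q\om;q^2)_\infty\,{}_2\phi_1\!\left(\begin{matrix}-q^{3/2}\om^2,\,q^{1/2}\om^2\\-q^2\end{matrix};q^2,q\om\right).\]

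The hard part is summing this ${}_2\phi_1$. Because two of its parameters carry factors of $q^{1/2}$, it lies outside the scope of every classical identity recalled in \S\ref{ps} --- precisely the reason \eqref{fsa} requires the substantial work of \S\ref{ncs} rather than fitting into the pattern of \S\ref{bjms}. I would therefore interpret the ${}_2\phi_1$ as a value of a generating function for Askey--Wilson polynomials and apply the quadratic transformation to Rogers polynomials developed in \S\ref{ncs}. The resulting product should combine with the prefactor $(-q^2,q\om;q^2)_\infty$ and with a complex-conjugate companion --- presumably through an application of Watson's quintuple product in the spirit of \eqref{wqs} --- to eliminate the $\om$-dependence and land on the right-hand side of \eqref{ntss}. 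Identifying the precise chain of Askey--Wilson and Rogers polynomial transformations that executes this final evaluation is the only substantively non-routine ingredient.
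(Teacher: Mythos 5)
Your reduction to a single series is correct and is in fact exactly the paper's first step: the integral representation of your $G(u,v,w)$, the cancellation of $(-q^3z;q^2)_\infty$, and the application of \eqref{tpia} reproduce (up to the harmless interchange $\om\leftrightarrow\om^2$) the paper's statement that \eqref{ntss} is equivalent to the one-variable evaluation \eqref{kse} of Theorem \ref{ntrt}. But at that point your argument stops. You explicitly leave open ``the precise chain of Askey--Wilson and Rogers polynomial transformations,'' and that chain is the entire content of the proof: the ${}_2\phi_1$ you have arrived at admits no classical summation, and without an actual evaluation the theorem is not proved. In the paper this step is carried out by the sextic transformation \eqref{ctb}, which comes from the generating function \eqref{adg} (built on the quadratic Askey--Wilson specializations \eqref{cpc}--\eqref{cpd}) evaluated at $z=\om$ via Lemma \ref{scl} and then summed by $q$-Gauss; one must replace $q$ by $-q^{1/2}$ in \eqref{ctb} --- a distinctly non-obvious substitution, signalled by the base $-q^3$ products on the right of \eqref{ntss} --- and set $a=q^{1/4}$, whereupon the resulting ${}_3\phi_2$ collapses to a ${}_2\phi_1$ in base $-q^{3/2}$ that \eqref{qgs} evaluates, yielding \eqref{kse}. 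None of this is present in your proposal, so the crux of the proof is missing.

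Moreover, the sketch you do give of the endgame is misdirected. You expect to pair your ${}_2\phi_1$ with a ``complex-conjugate companion'' and invoke Watson's quintuple product in the spirit of \eqref{wqs} to remove the $\om$-dependence; but the quintuple product plays no role in \eqref{fsa}--\eqref{krx} or \eqref{ntss} (it is only needed for \eqref{kcc}--\eqref{kce}). The $\om$-dependence disappears for a simpler reason: the closed form \eqref{kse} itself carries the factor $1/(q^{1/2}\omega^2;q)_\infty$, which (after $q\to q^2$, and with $\om\leftrightarrow\om^2$ in your normalization) cancels the $\om$-dependent factor $(q\om;q^2)_\infty$ in your prefactor, so a single series suffices and no recombination of conjugates is required. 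Thus what remains to be supplied is not a quintuple-product argument but the single-series evaluation \eqref{kse}, i.e.\ the Rogers-polynomial machinery of \S\ref{ops} specialized as described above.
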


\section{New cases of the Kanade--Russell conjectures}
 \label{ncs}

In this section we  prove the conjectures
 \eqref{fsa}--\eqref{krx} as well as   \eqref{ntss}.

\subsection{Reduction to one-variable series}

 We first reduce the results to one-variable summations.
By \eqref{frc} and \eqref{crpi} we have the reduction formulas
\begin{align*}
F(q^3,q^5,q^{12})&=(-q^3,q\omega^2;q^2)_\infty\,{}_2\phi_1\left(\begin{matrix}
q^{3/2}\omega,-q^{3/2}\omega \\-q^3
\end{matrix};q^2,q\omega^2 \right),\\
F(q,q^3,q^6)&=(-q,q\omega^2;q^2)_\infty\,{}_2\phi_1\left(\begin{matrix}
q^{1/2}\omega,-q^{1/2}\omega \\-q
\end{matrix};q^2,q\omega^2 \right),\\
F(q,q,q^6)&=
(q^5;q^4)_\infty\,{}_2\phi_2\left(\begin{matrix}
q\omega,q\om^2\\q^{5/2},-q^{5/2}
\end{matrix};q^2,-q \right),\\
F(q^2,q^{-1},q^6)&=(q^3;q^4)_\infty\,{}_2\phi_2\left(\begin{matrix}
q^{-1}\omega,q^{-1}\om^2\\q^{3/2},-q^{3/2}
\end{matrix};q^2,-q^3 \right).
\end{align*}
For the first two cases, Bringmann et al.\ \cite{bjm} found equivalent reductions
but for the last two their reductions are more complicated.
Replacing $q$ by $q^{1/2}$, it follows that the conjectures \eqref{fsa}--\eqref{krx} are equivalent to the following summations.

\begin{theorem}\label{krta}
The following summation formulas hold:
\begin{subequations}\label{ksba}
\begin{align}
 \label{ksa}\,{}_2\phi_1\left(\begin{matrix}
q^{3/4} \omega,-q^{3/4}\omega\\  -q^{3/2}
\end{matrix};q,{q^{1/2}}{\omega^2}\right)
&=\frac{(1+q^{1/2})(q^{1/2},q^{9/2};q^6)_\infty}{(q^{1/2}\omega^2;q)_\infty(q;q^2)_\infty(q^2;q^6)_\infty},\\
\label{ksb}\,{}_2\phi_1\left(\begin{matrix}
q^{1/4} \omega,-q^{1/4}\omega\\  -q^{1/2}
\end{matrix};q,{q^{1/2}}{\omega^2}\right)
&=\frac{(q^{3/2},q^{7/2};q^6)_\infty}{(q^{1/2}\omega^2;q)_\infty(q;q^2)_\infty(q^2;q^6)_\infty},\\
\label{ksc}{}_2\phi_2\left(\begin{matrix}
q^{1/2}\omega,q^{1/2}\om^2\\q^{5/4},-q^{5/4}
\end{matrix};q,-q^{1/2} \right)&=\frac {1-q^{1/2}}{(q^{1/2};q)_\infty(q^{1/2},q^4;q^{6})_\infty}
,\\
\label{ksd}{}_2\phi_2\left(\begin{matrix}
q^{-1/2}\omega,q^{-1/2}\om^2\\q^{3/4},-q^{3/4}
\end{matrix};q,-q^{3/2} \right)&=\frac 1{(q^{1/2};q)_\infty(q^{7/2},q^4;q^{6})_\infty}.
\end{align}
 \end{subequations}
\end{theorem}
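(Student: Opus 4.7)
The plan is to recognize each of the four sums in Theorem~\ref{krta} as a generating function of Askey--Wilson polynomials, evaluated at a specialization dictated by the cube roots of unity, and then to reduce to Rogers (continuous $q$-ultraspherical) polynomials via a quadratic transformation. For \eqref{ksa} and \eqref{ksb}, the numerator pair has the form $(a,-a)$, and since $(a,-a;q)_n=(a^2;q^2)_n$ the series naturally live in base $q^2$; this is exactly the input needed for the classical quadratic transformation that maps Askey--Wilson polynomials with parameters $(\beta^{1/2},-\beta^{1/2},(\beta q)^{1/2},-(\beta q)^{1/2})$ to the Rogers polynomials $C_n(x;\beta|q)$. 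I would match each LHS to a generating series of the shape $\sum_n(\cdots)\,p_n(\cos\tha;a,b,c,d\,|\,q^{1/2})\,t^n$ for a specific choice of parameters and of $\cos\tha$.

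Once the identification with a Rogers generating function has been made, evaluation follows from the classical formula
$$\sum_{n=0}^\infty C_n(\cos\tha;\beta|q)\,t^n=\frac{(\beta te^{\ti\tha},\beta te^{-\ti\tha};q)_\infty}{(te^{\ti\tha},te^{-\ti\tha};q)_\infty}.$$
Here I would set $e^{\ti\tha}=\om$, so that $\cos\tha=-1/2$; this is the source of the $\om$'s on the LHS of \eqref{ksba}. Using the cube-root factorization
$$(x\om,x\om^2;q)_\infty=\frac{(x^3;q^3)_\infty}{(x;q)_\infty},$$
the product collapses to $(t;q)_\infty(\beta^3 t^3;q^3)_\infty/[(\beta t;q)_\infty(t^3;q^3)_\infty]$, a product in bases $q$ and $q^3$. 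For the right choice of $\beta$ and $t$ in each case, this should, after combining with the prefactors generated by the quadratic transformation, match the stated right-hand sides of \eqref{ksba}.

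For the ${}_2\phi_2$-series \eqref{ksc} and \eqref{ksd}, I would first apply the Heine-type transformation \eqref{iht} to bring them into ${}_2\phi_1$-form (at the cost of an additional infinite-product prefactor) and then run the same argument, or alternatively, work directly with the parallel ${}_2\phi_2$ generating function for the same Askey--Wilson family. The main obstacle is twofold: first, pinning down the precise Askey--Wilson parameters and value of $t$ that make the quadratic transformation land on each LHS (this requires careful bookkeeping of base $q$ versus $q^{1/2}$ and of the accompanying signs and $\om$'s); and second, massaging the resulting cube-root product into the stated right-hand sides. For the second step I would expect to use repeatedly the Jacobi triple product \eqref{jtp}, together with the cube-root splitting above, and plausibly also Watson's quintuple product identity \eqref{wqs}, since the $(q^2;q^6)_\infty$, $(q^{1/2},q^{9/2};q^6)_\infty$ and $(q^4;q^6)_\infty$ factors visible in \eqref{ksba} are of the form that \eqref{wqs} produces.
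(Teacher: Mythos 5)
Your overall setup (Askey--Wilson generating functions, quadratic reduction to Rogers polynomials, specialization $z=\omega$, i.e.\ $x=-1/2$, and the cube-root factorization) is the right frame and is indeed what the paper uses. But the central evaluation step has a genuine gap: after applying the quadratic transformations \eqref{awu} to the Askey--Wilson generating function \eqref{awg}, what you obtain is not the plain Rogers generating function $\sum_n C_n(x;\beta|q)t^n$ but a \emph{weighted} one, e.g.\ $\sum_n\frac{(a^2q;q^2)_n}{(a^4;q^2)_n}C_n(x;a^2|q^2)t^n$ as in \eqref{acg}, or the analogous series in \eqref{ncg}, \eqref{adg}, \eqref{acgm}. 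These weighted series have no product evaluation, so the formula \eqref{cg} with $e^{\ti\theta}=\omega$ cannot be invoked to "collapse" them; your plan stalls exactly at the point where you say "evaluation follows from the classical formula." (Note also that for the specializations relevant to \eqref{ksba} the weights do not trivialize: e.g.\ for \eqref{ksa} one has $a=q^{3/4}$ and the weight $(q^{5/2};q^2)_n/(q^3;q^2)_n\neq 1$.)

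The paper's way around this is to use the cube-root factorization at the level of the individual coefficients rather than of the whole generating function: Lemma \ref{scl} expands $C_n(-\tfrac12;a|q)$ as a finite sum (this is precisely your factorization $(x\omega,x\omega^2;q)_\infty=(x^3;q^3)_\infty/(x;q)_\infty$ applied to \eqref{cg}), after which one interchanges the order of summation and sums the inner series by the $q$-Gauss formula \eqref{qgs} at the special value of $t$ forced by the theorem. This produces the sextic transformations of Proposition \ref{ctp}, whose right-hand sides are ${}_3\phi_2$'s in base $q^6$; at the relevant specializations of $a$ these reduce to ${}_2\phi_1$'s that are summed by a second application of \eqref{qgs}. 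Two further wrinkles that your sketch does not anticipate: \eqref{ksb} is obtained by bootstrapping, using the two-factor generating function \eqref{ncg} and the already proved \eqref{ksa} to evaluate the second factor; and for \eqref{ksd} the specialization $a=q^{5/4}$ makes the series in \eqref{ctan} divergent, so one must first transform by \cite[(III.9)]{gr} and continue analytically before specializing. Finally, no quintuple-product manipulation is needed for \eqref{ksba}; the products coming out of the double $q$-Gauss summation match the right-hand sides directly.
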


The sum in \eqref{ntss} can be represented as
 \begin{multline*}(q^2;q^2)_\infty\oint\frac{(-q^9z^3;q^6)_\infty(1/z,q^2z;q^2)_\infty}{(-q^2z;q)_\infty(q^{3/2}z;-q)_\infty}\,\frac{dz}{2\pi\ti z}\\
 \begin{split}&=(q^2;q^2)_\infty\oint\frac{(-q^3\om z,-q^3\om^2 z,1/z,q^2z;q^2)_\infty}{(-q^2z,q^{3/2}z,-q^{5/2}z;q^2)_\infty}\,\frac{dz}{2\pi\ti z}\\
 &=(-q^2,q\om^2;q^2)_\infty\,{}_2\phi_1\left(\begin{matrix}
q^{1/2} \omega,-q^{3/2}\omega\\  -q^2
\end{matrix};q^2,{q}{\omega^2}\right),\end{split}
 \end{multline*}
  where we used
\eqref{tpia}.
It follows that Theorem \ref{ntst} is equivalent to the
following result.

\begin{theorem}\label{ntrt} We have the summation formula
\begin{multline}\label{kse}\,{}_2\phi_1\left(\begin{matrix}
q^{1/4} \omega,-q^{3/4}\omega\\  -q
\end{matrix};q,{q^{1/2}}{\omega^2}\right)\\
=\frac{(q^3;q^6)_\infty}{(q^{1/2}\omega^2;q)_\infty(-q^{3/4},q^{5/4};-q^{3/2})_{\infty}(q^2;q^6)_\infty}.
 \end{multline}
\end{theorem}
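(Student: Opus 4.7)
The plan is to prove the single-variable reformulation \eqref{kse}, from which Theorem \ref{ntst} follows via the reduction carried out immediately before Theorem \ref{ntrt}. My strategy mirrors the approach employed elsewhere in \S\ref{ncs} for \eqref{ksa}--\eqref{ksd}, which the authors indicate rests on quadratic transformations between Askey--Wilson and Rogers polynomials. The first step is to apply \eqref{iht} to the left-hand side of \eqref{kse} with parameters $(a,b,c,z) = (q^{1/4}\omega, -q^{3/4}\omega, -q, q^{1/2}\omega^2)$. One computes $az=q^{3/4}$, $bz=-q^{5/4}$ and $abz/c = q^{1/2}\omega$, so that the left-hand side becomes
$$\frac{(q^{3/4},-q^{5/4};q)_\infty}{(-q,q^{1/2}\omega^2;q)_\infty}\,{}_2\phi_2\left(\begin{matrix}q^{1/2}\omega,\,q^{1/2}\omega^2\\ q^{3/4},\,-q^{5/4}\end{matrix};q,-q\right).$$
The upper parameters now form a conjugate cubic-root pair, and the identity $(q^{1/2}\omega,q^{1/2}\omega^2;q)_n = (q^{3/2};q^3)_n/(q^{1/2};q)_n$ collapses them into a single factor of base $q^3$.

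Second, I would recognise the resulting one-variable series as a generating-function value of a specialised Askey--Wilson polynomial, and apply a quadratic transformation of the type relating such a generating function to one for Rogers polynomials in a modified base. The denominator factor $(-q^{3/4},q^{5/4};-q^{3/2})_\infty$ on the right-hand side of \eqref{kse} strongly suggests that the target base is $Q = -q^{3/2}$. Evaluating the Rogers generating function at the cubic-root-of-unity point $\cos\theta=-1/2$ via
$$\sum_{n\ge 0} C_n(-1/2;\beta\,|\,Q)\,t^n = \frac{(t;Q)_\infty\,(\beta^3t^3;Q^3)_\infty}{(\beta t;Q)_\infty\,(t^3;Q^3)_\infty},$$
which is an immediate consequence of the standard Rogers generating function and the factorisation $(a\omega,a\omega^2;Q)_\infty = (a^3;Q^3)_\infty/(a;Q)_\infty$, should then, combined with Jacobi's triple product \eqref{jtp} to rewrite the factors in bases $-q^{3/2}$ and $-q^{9/2}$ via bases $q^3$ and $q^6$, produce the claimed numerator $(q^3;q^6)_\infty$ and the denominator factor $(q^2;q^6)_\infty$.

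The main obstacle is pinning down the precise quadratic transformation in the second step: standard quadratic transformations change the base by a factor of two in the exponent (for instance $q\mapsto q^2$), whereas the base change $q\mapsto -q^{3/2}$ demanded here does not appear directly in standard references such as \cite{gr}. Producing it, either by a bespoke derivation or by carefully composing two known quadratic transformations, is where the technical core of the proof will lie; once the transformation is in hand, verifying the bookkeeping of the auxiliary infinite-product factors introduced by \eqref{iht} should be routine.
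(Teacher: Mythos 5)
Your proposal is not a complete proof: you correctly identify the general strategy of the paper (quadratic Askey--Wilson/Rogers relations combined with the evaluation of Rogers polynomials at $x=-\tfrac12$, i.e.\ $z=\omega$, whose cubic factorization you quote correctly), but the decisive step --- the identity that transports the series to base $-q^{3/2}$ --- is exactly what you leave unresolved, and you say so yourself (``the main obstacle\dots is where the technical core of the proof will lie''). That step cannot be obtained by composing two standard quadratic transformations, because the required base change is not quadratic at all: it is the composite quadratic--cubic (sextic) structure of Proposition~\ref{ctp}. The paper's route is to first prove the sextic transformation \eqref{ctb} by taking the generating function \eqref{adg} (which encodes the quadratic relations \eqref{cpc}--\eqref{cpd}), setting $z=\omega$, inserting Lemma~\ref{scl}, interchanging the summations and evaluating the inner sum by the $q$-Gauss formula \eqref{qgs}; this turns a ${}_2\phi_1$ in base $q^2$ into a ${}_3\phi_2$ in base $q^3$. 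The base $-q^{3/2}$ then appears with no extra work, simply by substituting $q\mapsto -q^{1/2}$ in \eqref{ctb} (so $q^2\mapsto q$ on the left and $q^3\mapsto -q^{3/2}$ on the right) and putting $a=q^{1/4}$, after which one upper and one lower parameter cancel and the remaining ${}_2\phi_1$ in base $-q^{3/2}$ is summed by \eqref{qgs}, giving \eqref{kse}. Your sketch contains the two raw ingredients (a quadratic AW--Rogers relation and the $x=-\tfrac12$ cubic evaluation) but never assembles them into the needed transformation, so the gap is precisely the heart of the argument.

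A secondary issue: your first step, applying \eqref{iht} with $(a,b,c,z)=(q^{1/4}\omega,-q^{3/4}\omega,-q,q^{1/2}\omega^2)$, is computed correctly, but the resulting ${}_2\phi_2$ has lower parameters $q^{3/4}$ and $-q^{5/4}$, which are not of the form $at,-at$, so it does not match the ${}_2\phi_2$-type Rogers generating functions available in the paper (\eqref{acge}, \eqref{acgm}); the claim that one can ``recognise the resulting one-variable series as a generating-function value'' is therefore unsubstantiated as stated. In the paper's proof no such preliminary transformation is needed: the left-hand side of \eqref{kse} is matched directly with the left-hand side of \eqref{ctb} under the substitution described above.
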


\subsection{Orthogonal polynomials}
\label{ops}

Our main tool to prove Theorem \ref{krta} and Theorem \ref{ntrt}
is quadratic transformations relating Askey--Wilson and Rogers polynomials.
From now on,  the variables $x$ and $z$ are related by
$$x=\frac{z+z^{-1}}2. $$
The Rogers (or continuous $q$-ultraspherical) polynomials \cite{ai,r3} are given by the generating function
\begin{equation}\label{cg}\frac{(atz,at/z;q)_\infty}{(tz,t/z;q)_\infty}=\sum_{n=0}^\infty C_n\left(x;a|q\right)t^n.
\end{equation}
The more general Askey--Wilson polynomials \cite{aw} have the generating function
\begin{equation}\label{awg}{}_2\phi_1\left(\begin{matrix}
a z,bz\\ ab
\end{matrix};q,\frac tz\right){}_2\phi_1\left(\begin{matrix}
c/z,d/z\\ cd
\end{matrix};q,tz\right)=\sum_{n=0}^\infty\frac{p_n(x;a,b,c,d|q)}{(q,ab,cd;q)_n}\,t^n,
\end{equation}
where the right-hand side converges for $|t|<\min(|z|,|z|^{-1})$  \cite{iw}.
 They are symmetric in the parameters $(a,b,c,d)$.

The Rogers polynomials appear as a special case of the Askey--Wilson polynomials in several different ways  \cite[\S 7.5]{gr}. For instance,
\begin{subequations}\label{awu}
\begin{align}\label{cpa}C_n(x;a^2|q)&=\frac{(a^4;q)_n}{(q,-a^2,a^2q^{1/2},-a^2q^{1/2};q)_n}\,p_n\big(x;a,-a,aq^{1/2},-aq^{1/2}|q\big),\\
\label{cpb}C_n(x;a^2|q^2)&=\frac{(a^2;q)_n}{(q^2,a^2q;q^2)_n}\,p_n\big(x;a,-a,q^{1/2},-q^{1/2}|q\big), \\
\label{cpc}C_{2n}(x;a|q)&=\frac{(a^2;q^2)_n}{(q,-a;q)_{2n}}\,p_n\big(2x^2-1;a,aq,-1,-q|q^2\big), \\
\label{cpd}C_{2n+1}(x;a|q)&=\frac{2(a^2;q^2)_{n+1}}{(q,-a;q)_{2n+1}}\,x\,p_n\big(2x^2-1;a,aq,-q,-q^2|q^2\big). \end{align}
\end{subequations}
These relations are easy to understand by comparing the orthogonality measures 
for the polynomials involved.

Using \eqref{awu} in \eqref{awg} leads to alternative generating functions for Rogers polynomials. 
In particular, \eqref{cpa} gives 
\begin{multline}\label{ncg}{}_2\phi_1\left(\begin{matrix}
a z,-az\\ -a^2
\end{matrix};q,\frac tz\right){}_2\phi_1\left(\begin{matrix}
aq^{1/2}/z,-aq^{1/2}/z\\ -a^2q
\end{matrix};q,tz\right)\\
=\sum_{n=0}^\infty\frac{(a^2q^{1/2},-a^2q^{1/2};q)_n}{(-a^2q,a^4;q)_n}\,C_n(x;a^2|q)t^n.
\end{multline}
The generating functions arising from \eqref{cpb}--\eqref{cpd} are as follows.

\begin{lemma}\label{acl}
The Rogers polynomials have the generating functions
\begin{subequations}\label{ag}
\begin{align}\label{acg}\frac{(tq/z;q^2)_\infty}{(tz;q^2)_\infty}\,
{}_2\phi_1\left(\begin{matrix}
a z,-az\\ -a^2
\end{matrix};q,\frac tz\right)
&=\sum_{n=0}^\infty\frac{(a^2q;q^2)_n}{(a^4;q^2)_n}\,C_n(x;a^2|q^2)t^n,\\
\label{adg}\frac{(-t/z;q)_\infty}{(tz;q)_\infty}\,
{}_2\phi_1\left(\begin{matrix}
a z^2,az^2q\\ a^2q
\end{matrix};q^2,\frac {t^2}{z^2}\right)
&=\sum_{n=0}^\infty\frac{(-a;q)_n}{(a^2;q)_n}\,C_n(x;a|q)t^n,
\end{align}
\end{subequations}
where the right-hand sides converge for $|t|<\min(|z|,|z|^{-1})$.
\end{lemma}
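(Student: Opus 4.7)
The plan is to derive both identities from the Askey--Wilson generating function \eqref{awg} by substituting the quadratic reductions \eqref{awu}.

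For \eqref{acg}, I would take \eqref{awg} with parameters $(a,b,c,d)=(a,-a,q^{1/2},-q^{1/2})$ and use \eqref{cpb} to rewrite $p_n$ in terms of $C_n(x;a^2|q^2)$. The Pochhammer identities $(q;q)_n(-q;q)_n=(q^2;q^2)_n$ and $(a^2;q)_n(-a^2;q)_n=(a^4;q^2)_n$ telescope the coefficient on the right-hand side of \eqref{awg} to $(a^2q;q^2)_n/(a^4;q^2)_n$, matching \eqref{acg}. The second ${}_2\phi_1$-factor on the left-hand side of \eqref{awg}, namely ${}_2\phi_1(q^{1/2}/z,-q^{1/2}/z;-q;q,tz)$, sums by the disguised $q$-binomial theorem \eqref{aqbt} to $(tq/z;q^2)_\infty/(tz;q^2)_\infty$, which is exactly the prefactor in \eqref{acg}.

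For \eqref{adg}, the plan is to split the target sum into its even and odd parts and apply \eqref{cpc} and \eqref{cpd} separately. Both reductions produce Askey--Wilson polynomials in base $q^2$ evaluated at $2x^2-1=(z^2+z^{-2})/2$, so applying \eqref{awg} with $z\mapsto z^2$, $q\mapsto q^2$, and $t\mapsto t^2$ identifies the even part with
$${}_2\phi_1\!\left(\begin{matrix}az^2,aqz^2\\a^2q\end{matrix};q^2,\tfrac{t^2}{z^2}\right){}_2\phi_1\!\left(\begin{matrix}-1/z^2,-q/z^2\\q\end{matrix};q^2,t^2z^2\right)$$
and the odd part with the same first factor times $\tfrac{2xt}{1-q}\,{}_2\phi_1(-q/z^2,-q^2/z^2;q^3;q^2,t^2z^2)$. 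Factoring out the common first ${}_2\phi_1$ reduces \eqref{adg} to the identity
$${}_2\phi_1\!\left(\begin{matrix}-1/z^2,-q/z^2\\q\end{matrix};q^2,t^2z^2\right)+\frac{2xt}{1-q}\,{}_2\phi_1\!\left(\begin{matrix}-q/z^2,-q^2/z^2\\q^3\end{matrix};q^2,t^2z^2\right)=\frac{(-t/z;q)_\infty}{(tz;q)_\infty}.$$
By \eqref{qbt}, the right-hand side equals $\sum_m (-1/z^2;q)_m(tz)^m/(q;q)_m$; splitting this series into even and odd powers of $tz$ and using $(a;q)_{2n+1}=(1-a)(aq;q)_{2n}$ together with $2x=z+z^{-1}$ reproduces the two ${}_2\phi_1$-terms on the left.

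The main obstacle will be bookkeeping the Pochhammer manipulations --- in particular $(q;q)_{2n+1}=(1-q)(q^2,q^3;q^2)_n$ and $(a^2;q)_{2n+1}=(a^2;q^2)_{n+1}(a^2q;q^2)_n$ --- needed to convert \eqref{cpc} and \eqref{cpd} into the Askey--Wilson form above. No new ingredients beyond \eqref{awg}, \eqref{awu}, \eqref{aqbt}, and \eqref{qbt} should be required, and the stated convergence $|t|<\min(|z|,|z|^{-1})$ is inherited from \eqref{awg}.
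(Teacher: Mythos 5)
Your proposal is correct and follows essentially the same route as the paper: \eqref{acg} via the specialization $(a,-a,q^{1/2},-q^{1/2})$ in \eqref{awg} together with \eqref{cpb} and the evaluation \eqref{aqbt} of the second factor, and \eqref{adg} via the even/odd split using \eqref{cpc}--\eqref{cpd} and the $q$-binomial theorem split of $(-t/z;q)_\infty/(tz;q)_\infty$. The Pochhammer bookkeeping you outline checks out, so nothing further is needed.
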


\begin{proof}
Choose
$(a,b,c,d)=(a,-a,q^{1/2},-q^{1/2})$ in \eqref{awg}. 
By \eqref{cpb}, the right-hand side can be identified with the 
right-hand side of \eqref{acg}.
On the left-hand side, we apply \eqref{aqbt} in the form
\begin{equation}\label{aqbts}{}_2\phi_1\left(\begin{matrix}
q^{1/2}/z,-q^{1/2}/z\\ -q
\end{matrix};q,tz\right)=\frac{(tq/z;q^2)_\infty}{(tz;q^2)_\infty}. 
\end{equation}
This proves \eqref{acg}.

Using \eqref{cpc}--\eqref{cpd} and \eqref{awg}, the right-hand side of \eqref{adg} can be written
\begin{multline*}
\sum_{n=0}^\infty\frac{t^{2n}}{(q,q^2,a^2q;q^2)_n}\,p_n\left(2x^2-1;a,aq,-1,-q|q^2\right)\\
+\frac{2tx}{1-q}\sum_{n=0}^\infty\frac{t^{2n}}{(q^2,q^3,a^2q;q^2)_n}\,p_n\left(2x^2-1;a,aq,-q,-q^2|q^2\right)\\
\begin{split}&={}_2\phi_1\left(\begin{matrix}
az^2,az^2q\\ a^2q
\end{matrix};q^2,\frac {t^2}{z^2}\right)\\
&\quad\times\left({}_2\phi_1\left(\begin{matrix}
-1/z^2,-q/z^2\\ q
\end{matrix};q^2,t^2z^2\right)+\frac{2tx}{1-q}\,{}_2\phi_1\left(\begin{matrix}
-q/z^2,-q^2/z^2\\ q^3
\end{matrix};q^2,t^2z^2\right)\right).\end{split}\end{multline*}
The final factor is evaluated by the $q$-binomial theorem in the form
\begin{multline*}
\sum_{n=0}^\infty \frac{(-1/z^2;q)_{2n}}{(q;q)_{2n}}\,(tz)^{2n}+\sum_{n=0}^\infty \frac{(-1/z^2;q)_{2n+1}}{(q;q)_{2n+1}}\,(tz)^{2n+1}\\
=\sum_{n=0}^\infty \frac{(-1/z^2;q)_{n}}{(q;q)_{n}}\,(tz)^{n}=\frac{(-t/z;q)_\infty}{(tz;q)_\infty}.
\end{multline*}
This proves \eqref{adg}.
\end{proof}

Applying \eqref{iht}
to the left-hand side of \eqref{acg} gives the equivalent identity
\begin{multline}\label{acge}\frac{(a^2t^2;q^2)_\infty}{(-a^2;q)_\infty(tz,t/z;q^2)_\infty}\,
{}_2\phi_2\left(\begin{matrix}
tz,t/z\\at,-at
\end{matrix};q, -a^2\right)\\
=\sum_{n=0}^\infty\frac{(a^2q;q^2)_n}{(a^4;q^2)_n}\,C_n(x;a^2|q^2)t^n.
\end{multline}
To prove \eqref{krx} we will need the following variation of \eqref{acge}.

\begin{lemma}\label{aclm}
The Rogers polynomials have the generating function
\begin{multline}\label{acgm}
\frac{(a^2t^2;q^2)_\infty}{(-a^2q^{-1};q)_\infty(tz,t/z;q^2)_\infty}\,
{}_2\phi_2\left(\begin{matrix}
tz,t/z\\at,-at
\end{matrix};q, -a^2q^{-1}\right)
\\
=\sum_{n=0}^\infty\frac{(a^2q^{-1};q^2)_n}{(a^4q^{-2};q^2)_n}\,C_n(x;a^2|q^2)t^n,
\end{multline}
where the right-hand side converges for $|t|<\min(|z|,|z|^{-1})$. 
\end{lemma}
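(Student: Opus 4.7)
My plan is to prove Lemma~\ref{aclm} in parallel with Lemma~\ref{acl}: start from a suitable specialization of the Askey--Wilson generating function~\eqref{awg}, use the quadratic reduction~\eqref{cpb} to identify the right-hand side as a series in Rogers polynomials, and transform the left-hand side using Heine's transformation~\eqref{iht} and Euler's identity~\eqref{aqbt}. The first step is to take the same specialization used in the proof of~\eqref{acg}, namely $(a,b,c,d)=(a,-a,q^{1/2},-q^{1/2})$, for which~\eqref{cpb} produces Rogers polynomials $C_n(x;a^2|q^2)$ with ``balanced'' coefficient $(a^2q;q^2)_n/(a^4;q^2)_n$; this recovers \eqref{acge}.

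To obtain instead the ``off-balance'' coefficient $(a^2q^{-1};q^2)_n/(a^4q^{-2};q^2)_n$ of~\eqref{acgm}, the key technical step is to express the desired ${}_2\phi_2(tz,t/z;at,-at;q,-a^2q^{-1})$ via a splitting analogous to that used in Lemma~\ref{crl}, which handled a similarly off-balance case ($\alpha_1\alpha_2=-\beta_1\beta_2^2q$ rather than $\alpha_1\alpha_2=\beta_1\beta_2\beta_3$). Concretely, I would decompose the target ${}_2\phi_2$ into a combination of two ${}_2\phi_2$ series each of which is Heine-transformable and hence corresponds, via \eqref{iht}, to an Askey--Wilson generating function identity with parameters shifted by $q^{\pm 1/2}$. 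Recombining the two identities and using the Euler factorization $(x;q)_\infty(-x;q)_\infty=(x^2;q^2)_\infty$ to consolidate prefactors should yield~\eqref{acgm}.

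The main obstacle is this balance mismatch itself: with upper parameters $(P,Q)=(tz,t/z)$, lower parameters $(R,S)=(at,-at)$, and argument $C=-a^2q^{-1}$, one finds $PQ\cdot C=-a^2t^2q^{-1}$ while $RS=-a^2t^2$, so Heine's balance relation $PQ\cdot C=RS$ is violated by a factor of $q^{-1}$. Consequently the ${}_2\phi_2$ in~\eqref{acgm} is not the image of any ${}_2\phi_1$ under~\eqref{iht}, and a single application of Heine's transformation cannot produce the desired form. Identifying the correct splitting and verifying that the two shifted Askey--Wilson expansions recombine termwise into the target coefficient $(a^2q^{-1};q^2)_n/(a^4q^{-2};q^2)_n$ is where the quadratic transformation machinery invoked in~\S\ref{ops} is expected to play its essential role; alternative routes via the other reductions~\eqref{cpa} or~\eqref{cpc}--\eqref{cpd}, or via a contiguous relation among ${}_2\phi_2$ series, provide fallback strategies if the splitting approach proves unwieldy.
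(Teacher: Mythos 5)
Your outline points in the right direction: the splitting idea modelled on Lemma~\ref{crl} is in fact exactly how the paper's argument works, read backwards. One writes
\begin{equation*}
\frac{1}{(at,-at;q)_n}=\frac12\left(\frac{1-atq^{-1}}{(atq^{-1},-at;q)_n}+\frac{1+atq^{-1}}{(at,-atq^{-1};q)_n}\right),
\end{equation*}
so the unbalanced ${}_2\phi_2$ in \eqref{acgm} splits into two balanced pieces, each of which is the image under \eqref{iht} of a ${}_2\phi_1$ with numerator parameters $(aq^{-1}z,-az)$, respectively $(az,-aq^{-1}z)$, and denominator $-a^2q^{-1}$; together with the factor coming from \eqref{aqbts}, these are the Askey--Wilson generating functions \eqref{awg} for $p_n(x;aq^{-1},-a,q^{1/2},-q^{1/2}|q)$ and $p_n(x;a,-aq^{-1},q^{1/2},-q^{1/2}|q)$. (Note the shift is $a\mapsto aq^{-1}$ in one of the parameters $\pm a$, not a shift by $q^{\pm1/2}$ as you state; the parameters $q^{1/2},-q^{1/2}$ are untouched.)

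The genuine gap is the recombination step, which you explicitly leave as an expectation. What is needed, and what none of \eqref{cpa}--\eqref{cpd} supplies, is the identity
\begin{equation*}
p_n(x;aq^{-1},-a,q^{1/2},-q^{1/2}|q)+p_n(x;a,-aq^{-1},q^{1/2},-q^{1/2}|q)
=2\,\frac{(q^2,a^2q^{-1};q^2)_n}{(a^2q^{-1};q)_n}\,C_n(x;a^2|q^2),
\end{equation*}
which produces precisely the coefficient $(a^2q^{-1};q^2)_n/(a^4q^{-2};q^2)_n$ after dividing by the normalization in \eqref{awg}. The paper obtains this from the three-term contiguous relation \cite[Eq.\ (7.6.8)]{gr} for Askey--Wilson polynomials, specialized to $(aq^{-1},-a,q^{1/2},-q^{1/2})$: symmetrizing in $a\mapsto -a$ kills the $p_{n-1}$ term (its coefficient is odd in $a$ while $p_{n-1}(x;a,-a,q^{1/2},-q^{1/2}|q)$ is even), and then \eqref{cpb} converts the surviving term into $C_n(x;a^2|q^2)$. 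Without identifying this contiguous relation (or an equivalent), the claim that the two shifted expansions ``recombine termwise into the target coefficient'' is unsubstantiated, so as it stands your proposal is a plausible plan whose hardest step is missing rather than a proof.
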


\begin{proof}
We start from the identity
\begin{multline*}p_n(x;aq^{-1},-a,q^{1/2},-q^{1/2}|q)
=\frac{1-a^2q^{n-1}}{1-a^2q^{2n-1}}\,p_n(x;a,-a,q^{1/2},-q^{1/2}|q)\\
-{aq^{-1} (1-q^{2n})}\,p_{n-1}(x;a,-a,q^{1/2},-q^{1/2}|q), \end{multline*}
which is a special case of \cite[Eq.\ (7.6.8)]{gr} (in the first edition, a factor $(q;q)_n$ is missing on the right-hand side of (7.6.9)). Taking the even part in $a$ gives\begin{multline*}p_n(x;aq^{-1},-a,q^{1/2},-q^{1/2}|q)+p_n(x;a,-aq^{-1},q^{1/2},-q^{1/2}|q)\\
=2\frac{1-a^2q^{n-1}}{1-a^2q^{2n-1}}\,p_n(x;a,-a,q^{1/2},-q^{1/2}|q)
=2\frac{(q^2,a^2q^{-1};q^2)_n}{(a^2q^{-1};q)_n}\,C_n(x;a^2|q^2),
\end{multline*}
where we used \eqref{cpb}. It follows that the right-hand side of \eqref{acgm} equals
$$\frac 12\sum_{n=0}^\infty\frac{p_n(x;aq^{-1},-a,q^{1/2},-q^{1/2}|q)+p_n(x;a,-aq^{-1},q^{1/2},-q^{1/2}|q)}{(q,-a^2q^{-1},-q;q)_n}\,t^n. $$
Summing the series using \eqref{awg} and simplifying using \eqref{aqbts}
gives
$$\frac 12\frac{(tq/z;q^2)_\infty}{(tz;q^2)_\infty}\,\left(
{}_2\phi_1\left(\begin{matrix}
aq^{-1} z,-az\\ -a^2q^{-1}
\end{matrix};q,\frac tz\right) +{}_2\phi_1\left(\begin{matrix}
a z,-aq^{-1}z\\ -a^2q^{-1}
\end{matrix};q,\frac tz\right) \right).$$
By \eqref{iht}, this is equal to
\begin{multline*}\frac{(a^2t^2;q^2)_\infty}{2(tz,t/z;q^2)_\infty(-a^2q^{-1};q)_\infty}\left(
(1-atq^{-1})\,
{}_2\phi_2\left(\begin{matrix}
tz,t/z\\ atq^{-1},-at
\end{matrix};q,-a^2q^{-1}\right) \right.\\
\left.+
(1+atq^{-1})\,
{}_2\phi_2\left(\begin{matrix}
tz,t/z\\ at,-atq^{-1}
\end{matrix};q,-a^2q^{-1}\right) \right).
\end{multline*}
Adding the series termwise as in the proof of Lemma \ref{crl} gives the desired expression.
\end{proof}

Another key tool is the following identity for Rogers polynomials
at  $x=-1/2$ or, equivalently, 
 $z=\om$.

\begin{lemma} \label{scl}
One has
\begin{equation}\label{csv}C_n\left(-\frac 12;a\big|q\right)=\sum_{l=0}^{\lfloor n/3\rfloor}\frac{(a^3;q^3)_l(a^{-1};q)_{n-3l}}{(q^3;q^3)_l(q;q)_{n-3l}}\,a^{n-3l}.
 \end{equation}
\end{lemma}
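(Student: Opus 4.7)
The plan is to evaluate the generating function \eqref{cg} at the special point $z=\omega$ (so that $x=(\omega+\omega^{-1})/2=(\omega+\omega^2)/2=-1/2$) and then factor the resulting quotient into two pieces that can each be expanded by the $q$-binomial theorem \eqref{qbt}. Reading off the coefficient of $t^n$ will produce \eqref{csv} directly.

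First, I would invoke the classical cyclotomic factorization
\[
(t;q)_\infty\,(t\omega;q)_\infty\,(t\omega^2;q)_\infty=(t^3;q^3)_\infty,
\]
which gives
\[
\frac{(at\omega,at\omega^2;q)_\infty}{(t\omega,t\omega^2;q)_\infty}
=\frac{(a^3t^3;q^3)_\infty}{(t^3;q^3)_\infty}\cdot\frac{(t;q)_\infty}{(at;q)_\infty}.
\]
By \eqref{cg}, the left-hand side equals $\sum_n C_n(-1/2;a\,|\,q)\,t^n$.

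Next, I would expand each of the two factors on the right using the $q$-binomial theorem \eqref{qbt}. Applying it with base $q^3$ and parameters $(a^3,t^3)$ gives
\[
\frac{(a^3t^3;q^3)_\infty}{(t^3;q^3)_\infty}=\sum_{l=0}^{\infty}\frac{(a^3;q^3)_l}{(q^3;q^3)_l}\,t^{3l},
\]
while applying it with parameters $(a^{-1},at)$ (i.e.\ $a^{-1}\cdot at=t$) gives
\[
\frac{(t;q)_\infty}{(at;q)_\infty}=\sum_{m=0}^{\infty}\frac{(a^{-1};q)_m}{(q;q)_m}\,(at)^m.
\]
Multiplying these two series together and collecting the coefficient of $t^n$ (which forces $m=n-3l$ and $0\le l\le\lfloor n/3\rfloor$) yields precisely the right-hand side of \eqref{csv}.

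There is no real obstacle here: the whole argument is a two-line calculation once one spots that substituting $z=\omega$ turns the denominator of the generating function into a cube-root-of-unity product and hence into a pure $(t^3;q^3)_\infty$ factor. The only point to verify carefully is the bookkeeping of the power of $a$, which comes entirely from the second factor and produces the correct $a^{n-3l}$.
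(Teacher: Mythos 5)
Your proof is correct and is essentially the same as the paper's: the paper also sets $z=\omega$ in \eqref{cg}, uses the factorization $(t,t\omega,t\omega^2;q)_\infty=(t^3;q^3)_\infty$ to rewrite the quotient as $\frac{(t;q)_\infty}{(at;q)_\infty}\cdot\frac{(a^3t^3;q^3)_\infty}{(t^3;q^3)_\infty}$, and expands both factors by the $q$-binomial theorem before extracting the coefficient of $t^n$. Nothing is missing.
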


To prove this, we simply use the $q$-binomial theorem to write the  generating function \eqref{cg} as
$$ \frac{(at\omega,at\omega^{-1};q)_\infty}{(t\omega,t\omega^{-1};q)_\infty}=\frac{(t;q)_\infty}{(at;q)_\infty}\cdot
\frac{(a^3t^3;q^3)_\infty}{(t^3;q^3)_\infty}
=\sum_{k=0}^\infty\frac{(a^{-1};q)_k}{(q;q)_k}\,(at)^k
\sum_{l=0}^\infty\frac{(a^3;q^3)_l}{(q^3;q^3)_l}\,t^{3l}.$$
In hypergeometric notation, \eqref{csv} is
$$C_n\left(-\frac 12;a\big|q\right)=\frac{a^n(a^{-1};q)_n}{(q;q)_n}\,
{}_4\phi_3\left(\begin{matrix}
q^{-n},q^{1-n},q^{2-n},a^3\\ aq^{1-n},aq^{2-n},aq^{3-n}
\end{matrix};q^3,q^3\right).$$
This is a balanced series, so 
one can obtain alternative expressions  by applying Sears' and Watson's transformation formulas to the right-hand side. 

Combining the results stated above we obtain the following sextic transformations, which seem to be new.

\begin{proposition}\label{ctp}
The transformation formulas 
\begin{subequations}
\begin{multline}\label{cta}{}_2\phi_1\left(\begin{matrix}
a \omega,-a\omega\\ -a^2
\end{matrix};q,a^2q^{-1}\om^2\right)\\
=\frac{(a^6;q^2)_\infty(a^6q^{-3};q^6)_\infty}{(a^2q^{-1}\omega^2,a^4q^{-1};q)_\infty}\,{}_3\phi_2\left(\begin{matrix}
a^2q,a^2q^3,a^2q^5\\ a^6q^2,a^6q^4
\end{matrix};q^6,a^6q^{-3}\right),
 \end{multline}
  \begin{multline}\label{ctb}{}_2\phi_1\left(\begin{matrix}
a \omega,aq\omega\\ a^2q
\end{matrix};q^2,a^2\om^2\right)\\
=\frac{(a^3;q)_\infty(-a^3;q^3)_\infty}{(a^2\omega^2,a^4;q^2)_\infty}\,{}_3\phi_2\left(\begin{matrix}
-a,-aq,-aq^2\\ a^3q,a^3q^2
\end{matrix};q^3,-a^3\right),\end{multline}
 \begin{multline}\label{ctam}{}_2\phi_2\left(\begin{matrix}
a^2q^{-1} \omega,a^2q^{-1}\om^2\\ a^3q^{-1},-a^3q^{-1}
\end{matrix};q,-a^2q^{-1}\right)\\
=\frac{(-a^2q^{-1};q)_\infty(a^6q^{-3};q^6)_\infty}{(a^4q^{-2};q)_\infty}\,{}_3\phi_2\left(\begin{matrix}
a^2q^{-1},a^2q,a^2q^3\\ a^6q^{-2},a^6q^2
\end{matrix};q^6,a^6q^{-3}\right),
 \end{multline}
 \begin{multline}\label{ctan}
{}_2\phi_2\left(\begin{matrix}
a^2q^{-3}\om ,a^2q^{-3}\om^2\\a^3q^{-3},-a^3q^{-3}
\end{matrix};q, -a^2q^{-1}\right)
\\
=
\frac{(-a^2q^{-1};q)_\infty(a^6q^{-9};q^6)_\infty}{(1-a^2q^{-3})(1-a^6q^{-6})(a^4q^{-3};q)_\infty}
\left(
{}_3\phi_2\left(\begin{matrix}
a^2q^{-1},a^2q,a^2q^3\\a^6q^{-4},a^6q^{-2}
\end{matrix};q^6, a^6q^{-9}\right)\right.\\
\left.-a^2q^{-3}\frac{1-a^2q^{-1}}{1-a^6q^{-4}}
\,{}_3\phi_2\left(\begin{matrix}
a^2q,a^2q^3,a^2q^5\\a^6q^{-2},a^6q^{2}
\end{matrix};q^6, a^6q^{-9}\right)\right),
\end{multline}
\end{subequations}
hold when the series on both sides converge.
\end{proposition}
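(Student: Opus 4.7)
The plan is to derive each of the four transformation formulas by specializing an appropriate generating function for Rogers polynomials from \S\ref{ops} at $x=-1/2$ (equivalently $z=\om$ or $\om^2$), expanding $C_n(-1/2;a|q)$ or $C_n(-1/2;a^2|q^2)$ via Lemma~\ref{scl}, interchanging the resulting double sum, and evaluating the inner one-variable sum in closed form. The surviving outer sum over the index $l$ from Lemma~\ref{scl} then becomes the ${}_3\phi_2$-series in base $q^6$, and the infinite-product prefactors collapse using the cubic factorisation $(x,x\om,x\om^2;q)_\infty=(x^3;q^3)_\infty$ together with $(x,-x;q)_\infty=(x^2;q^2)_\infty$.

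For \eqref{cta} the natural choice is \eqref{acg} at $(z,t)=(\om,a^2q^{-1})$; after interchange, the inner sum is a ${}_2\phi_1$ in base $q^2$ whose argument $a^4q^{-1}$ meets the balance condition $c/(ab)$ of $q$-Gauss \eqref{qgs}, so it closes immediately. The proof of \eqref{ctam} is identical with \eqref{acgm} replacing \eqref{acg}. For \eqref{ctb} I would use \eqref{adg} at $(z,t)=(\om^2,-a)$; the minus sign in $t=-a$ is crucial, as it shifts the argument of the inner ${}_2\phi_1$ to $-a^2=c/(ab)$ so that $q$-Gauss still applies, while the factor $(-1)^n$ generated by $t^n=(-a)^n$ combines with the expansion to produce the $(-a^3)^l$ appearing in the target ${}_3\phi_2$. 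Both cubic factorisations are needed to reduce the prefactor to the one in \eqref{ctb}.

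The principal obstacle is \eqref{ctan}. With the natural specialization $(z,t)=(\om,a^2q^{-3})$ in \eqref{acgm} the inner ${}_2\phi_1$ has argument $a^4q^{-3}$, off from the $q$-Gauss balance $a^4q^{-1}$ by a factor $q^{-2}$, so it cannot be summed directly. The plan is to apply \eqref{iht} to the inner series: the $q^{-2}$ mismatch surfaces as an upper parameter of a resulting ${}_2\phi_2$, which therefore terminates at $m=1$ and is explicitly two terms. Carrying these two terms through the outer $l$-summation produces a linear combination of two ${}_3\phi_2$-series in base $q^6$ with lower parameters $(a^6q^{-2},a^6q^2)$, and relating this combination to the two ${}_3\phi_2$-series on the right of \eqref{ctan} requires the telescoping identity
\[
\frac{(a^6q^2;q^6)_l}{(a^6q^{-4};q^6)_l}=\frac{1-a^6q^{6l-4}}{1-a^6q^{-4}}
\]
together with a three-term contiguous relation for ${}_3\phi_2$ that shifts the upper parameters by $q^2$ (turning $(a^2q^{-1},a^2q,a^2q^3)$ into the $(a^2q,a^2q^3,a^2q^5)$ appearing in the second target series). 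This final rearrangement, which is where the characteristic denominator factor $(1-a^2q^{-3})(1-a^6q^{-6})$ of \eqref{ctan} arises, is the most delicate step and is where I expect the main bookkeeping effort to lie.
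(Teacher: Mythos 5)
Your proposal follows essentially the same route as the paper: specialize the generating functions \eqref{acg}, \eqref{adg}, \eqref{acgm} at a cube root of unity, expand via Lemma~\ref{scl}, interchange, and close the inner sum with $q$-Gauss; your specializations $(z,t)=(\om,a^2q^{-1})$ for \eqref{cta}/\eqref{ctam} and $t=-a$ for \eqref{ctb} are exactly the ones the paper uses (the latter two are left as ``the same way'' in the paper, and your sign choice $t=-a$ correctly supplies the omitted detail). For \eqref{ctan} your device is also the paper's in disguise: the paper applies Heine's transformation (III.2) to the inner ${}_2\phi_1$, producing an upper parameter $q^{-2}$ so the series terminates after two terms; your application of \eqref{iht} yields a ${}_2\phi_2$ with the same upper parameter $q^{-2}$ and, after evaluating its two terms, literally the same two-term expression. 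The one place your plan drifts is the final reassembly. No three-term contiguous relation is needed: writing the two-term inner sum as $1-a^2q^{-3}\frac{1-a^2q^{6l-1}}{1-a^6q^{6l-4}}$ and absorbing the linear factors via your telescoping identity together with its companion $(a^2q^{-1};q^6)_l\,(1-a^2q^{6l-1})=(1-a^2q^{-1})(a^2q^5;q^6)_l$ produces the two ${}_3\phi_2$'s of \eqref{ctan} directly, with the stated coefficient. By contrast, the grouping you describe (keeping the $m=1$ term $\frac{(1-a^4q^{-3})a^4q^{6l-4}}{(1-a^2q^{-3})(1-a^6q^{6l-4})}$ as is) leaves a stray $q^{6l}$, so the second $l$-sum comes out with argument $a^6q^{-3}$ and upper parameters $(a^2q^{-1},a^2q,a^2q^3)$ --- not the series with lower parameters $(a^6q^{-2},a^6q^2)$, upper $(a^2q,a^2q^3,a^2q^5)$ and argument $a^6q^{-9}$ appearing in \eqref{ctan}; you would then need an extra (termwise derivable, but avoidable) rearrangement. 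So: correct strategy throughout, with the last step of \eqref{ctan} simpler than you anticipate once the numerator is regrouped as $(1-a^6q^{6l-4})-a^2q^{-3}(1-a^2q^{6l-1})$.
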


\begin{proof}
We first consider \eqref{ctam}. We let
$z=\om$ in \eqref{acgm}, insert the first expression in Lemma \ref{scl}, replace $n$ by $n+3l$ and change the order of summation.
This gives
\begin{multline}\label{ctap}
\frac{(a^2t^2;q^2)_\infty}{(-a^2q^{-1};q)_\infty(t\om,t\om^2;q^2)_\infty}\,
{}_2\phi_2\left(\begin{matrix}
t\om ,t\om^2\\at,-at
\end{matrix};q, -a^2q^{-1}\right)
\\
\begin{split}&
=\sum_{l=0}^\infty \sum_{n=0}^\infty\frac{(a^2q^{-1};q^2)_{n+3l}(a^6;q^6)_l(a^{-2};q^2)_n}{(a^4q^{-2};q^2)_{n+3l}(q^6;q^6)_l(q^2;q^2)_n}\,a^{2n}t^{n+3l}\\
&=\sum_{l=0}^\infty \frac{(a^2q^{-1};q^2)_{3l}(a^6;q^6)_l}{(a^4q^{-2};q^2)_{3l}(q^6;q^6)_l}\,t^{3l}
{}_2\phi_1\left(\begin{matrix}
a^{-2},a^2q^{6l-1}\\ a^4q^{6l-2}
\end{matrix};q^2, a^2t\right).
\end{split}\end{multline}
We now specialize $t=a^2q^{-1}$ and apply the $q$-Gauss summation \eqref{qgs} in the form
$$ {}_2\phi_1\left(\begin{matrix}
a^{-2},a^2q^{6l-1}\\ a^4q^{6l-2}
\end{matrix};q^2,a^4q^{-1}\right)
=\frac{(a^6q^{-2},a^2q^{-1};q^2)_\infty}{(a^4q^{-2};q)_\infty}\frac{(a^4q^{-2};q^2)_{3l}}{(a^6q^{-2};q^2)_{3l}}.
$$
It is straight-forward to write the result as in \eqref{ctam}.

The identities \eqref{cta} and \eqref{ctb}
follow in exactly the same way from \eqref{acg} and \eqref{adg}, respectively.
We do not give the details.

Finally, we let $t=a^2q^{-3}$ 
in \eqref{ctap} and use \cite[(III.2)]{gr} to write
\begin{align*}
{}_2\phi_1\left(\begin{matrix}
a^{-2},a^2q^{6l-1}\\ a^4q^{6l-2}
\end{matrix};q^2, a^4q^{-3}\right)
&=\frac{(a^2q^{-1},a^6q^{6l-4};q^2)_\infty}{(a^4q^{-3},a^4q^{6l-2};q^2)_\infty}\,
{}_2\phi_1\left(\begin{matrix}
q^{-2},a^2q^{6l-1}\\ a^6q^{6l-4}
\end{matrix};q^2, a^2q^{-1}\right)\\
&=\frac{(a^2q^{-1},a^6q^{6l-4};q^2)_\infty}{(a^4q^{-3},a^4q^{6l-2};q^2)_\infty}\left(1-a^2q^{-3}\frac{1-a^2q^{6l-1}}{1-a^6q^{6l-4}}\right).
\end{align*}
Inserting this into \eqref{ctap} and simplifying gives \eqref{ctan}.

\end{proof}

\subsection{Proof of the remaining triple summations}
\label{tps}

We are now ready to prove
Theorem \ref{krta} and Theorem \ref{ntrt}, which imply the 
four Kanade--Russell conjectures
 \eqref{fsa}--\eqref{krx} and the new summation \eqref{ntss}.

\begin{proof}[Proof of \eqref{fsa}]
Let $a=q^{3/4}$ in  \eqref{cta}. Then, the right-hand side reduces to 
$$\frac{(q^{9/2};q^2)_\infty(q^{3/2};q^6)_\infty}{(q^{1/2}\om^2;q)_\infty(q^2;q)_\infty}\, {}_2\phi_1\left(\begin{matrix}
q^{5/2},q^{9/2}\\ q^{17/2}
\end{matrix};q^6,q^{3/2}\right).$$
Applying the $q$-Gauss summation \eqref{qgs} and simplifying gives \eqref{ksa}.
\end{proof}

\begin{proof}[Proof of \eqref{fsc}]
We  need  the generating function \eqref{ncg}.
Like in the proof of Proposition \ref{ctp}, we let  $z=\om$, insert the first expression in Lemma \ref{scl} and
change the order of summation. This gives
\begin{multline}\label{spg}
{}_2\phi_1\left(\begin{matrix}
a \om,-a\om\\ -a^2
\end{matrix};q,\frac t\om\right){}_2\phi_1\left(\begin{matrix}
aq^{1/2}\om^2,-aq^{1/2}\om^2\\ -a^2q
\end{matrix};q,t\om\right)\\
=\sum_{l=0}^\infty\frac{(a^2q^{1/2},-a^2q^{1/2};q)_{3l}(a^6;q^3)_l}{(-a^2q,a^4;q)_{3l}(q^3;q^3)_l}\,t^{3l}\\
\times\,{}_3\phi_2\left(\begin{matrix}
a^{-2},a^2q^{3l+1/2},-a^2q^{3l+1/2}\\ -a^2q^{3l+1},a^4q^{3l}
\end{matrix};q, a^2t\right).
\end{multline}
In the special case $a=q^{1/4}$ and $t=q^{1/2}$, the inner series reduces to 
$${}_2\phi_1\left(\begin{matrix}
q^{-1/2},-q^{3l+1}\\ -q^{3l+3/2}
\end{matrix};q, q\right)=\frac{(q^{1/2},-q^{3l+2};q)_\infty}{(q,-q^{3l+3/2};q)_\infty},$$
by  \eqref{qgs}.
The right-hand side of \eqref{spg} then simplifies to
\begin{multline*}
\frac{(q^{1/2},-q^{2};q)_\infty}{(q,-q^{3/2};q)_\infty}\sum_{l=0}^\infty \frac{(-q;q)_{3l}(q^{3/2};q^3)_l}{(-q^2;q)_{3l}(q^3;q^3)_l}\,q^{3l/2}\\
\begin{split}&=\frac{(q^{1/2},-q^{2};q)_\infty}{(q,-q^{3/2};q)_\infty}\,
{}_2\phi_1\left(\begin{matrix}
-q,q^{3/2}\\ -q^4
\end{matrix};q^3, q^{3/2}\right)\\
&=(1+q^{1/2})\frac{(q^{1/2},-q;q)_\infty(-q^{5/2},q^3;q^3)_\infty}{(q,-q^{1/2};q)_\infty(-q,q^{3/2};q^3)_\infty},
\end{split}\end{multline*}
again by \eqref{qgs}.
The left-hand side of \eqref{spg} is
$${}_2\phi_1\left(\begin{matrix}
q^{1/4} \om,-q^{1/4}\om\\ -q^{1/2}
\end{matrix};q,\frac {q^{1/2}}\om\right){}_2\phi_1\left(\begin{matrix}
q^{3/4}\om^2,-q^{3/4}\om^2\\ -q^{3/2}
\end{matrix};q,q^{1/2}\om\right).$$
The second factor is computed by \eqref{ksa}, with $\omega$ replaced by $\om^2$.
After  simplification, this results in \eqref{ksb}.
\end{proof}

\begin{proof}[Proof of \eqref{fsb}]
If we let $a=q^{3/4}$ in  \eqref{ctam}, the right-hand side becomes
$$\frac{(-q^{1/2};q)_\infty(q^{3/2};q^6)_\infty}{(q;q)_\infty}\,{}_2\phi_1\left(\begin{matrix}
q^{1/2},q^{9/2}\\ q^{13/2}
\end{matrix};q^6,q^{3/2}\right).$$
Applying the $q$-Gauss summation
 \eqref{qgs} and simplifying gives \eqref{ksc}.
\end{proof}

\begin{proof}[Proof of \eqref{krx}]
To prove the equivalent identity \eqref{ksd} we would like to let $a=q^{5/4}$ in \eqref{ctan}. 
Since the series on the right diverge at this point, 
we transform them using \cite[(III.9)]{gr}. This gives
 \begin{multline*}
{}_2\phi_2\left(\begin{matrix}
a^2q^{-3}\om ,a^2q^{-3}\om^2\\a^3q^{-3},-a^3q^{-3}
\end{matrix};q, -a^2q^{-1}\right)
=
\frac{(1-a^2q^{-2})(a^8q^{-8};q^6)_\infty}{(1-a^6q^{-6})(a^2q^{-3};q)_\infty(a^4q^{-3};q^2)_\infty}\\
\begin{split}&\times\left(
\frac{(a^4q^{-3};q^6)_\infty}{(a^6q^{-2};q^6)_\infty}\,
{}_3\phi_2\left(\begin{matrix}
a^2q,a^4q^{-7},a^4q^{-3}\\a^6q^{-4},a^8q^{-8}
\end{matrix};q^6, {a^4}{q^{-3}}\right)
\right.\\
&\quad\left.-a^2q^{-3}(1-a^2q^{-1})
\,
\frac{(a^4q;q^6)_\infty}{(a^6q^{-4};q^6)_\infty}\,
{}_3\phi_2\left(\begin{matrix}
a^2q,a^4q^{-7},a^4q^{-5}\\a^6q^{-2},a^8q^{-8}
\end{matrix};q^6, a^4q\right)
\right).
\end{split}\end{multline*}
By analytic continuation, this holds for $|a^4q^{-3}|<1$.
When $a=q^{5/4}$, the ${}_3\phi_2$-series on the right reduce to
$$
{}_1\phi_0\left(q^{-2};q^6, q^2\right)=0 $$
and
$$
{}_3\phi_2\left(\begin{matrix}
q^{7/2},q^{-2},1\\q^{11/2},q^2
\end{matrix};q^6,q^6\right)=1,
$$
respectively. After simplification we arrive at \eqref{ksd}.
\end{proof}

\begin{proof}[Proof of \eqref{ntss}]
Replace $q$ by $-q^{1/2}$ in \eqref{ctb} and then let $a=q^{1/4}$. This gives
\begin{multline*}{}_2\phi_1\left(\begin{matrix}
q^{1/4} \omega,-q^{3/4}\omega\\  -q
\end{matrix};q,{q^{1/2}}{\omega^2}\right)\\
=\frac{(q^{3/4};-q^{1/2})_\infty(-q^{3/4};-q^{3/2})_\infty}{(q^{1/2}\omega^2,q;q)_\infty}\,{}_2\phi_1\left(\begin{matrix}
-q^{1/4},q^{3/4}\\ q^{7/4}
\end{matrix};-q^{3/2},-q^{3/4}\right).
\end{multline*}
Applying  \eqref{qgs} and simplifying we obtain \eqref{kse}.
\end{proof}

\subsection{Quadratic transformations}

We conclude by discussing some  applications of Lemma~\ref{acl} to one-variable series.
We will need the generating function \cite{ims,rv}
$$\frac{(btz;q)_\infty}{(tz;q)_\infty} \, {}_3\phi_2\left(\begin{matrix}
a,b,az^2\\ a^2,btz
\end{matrix};q,\frac tz\right)\\
=\sum_{n=0}^\infty\frac{(b;q)_n}{(a^2;q)_n}\,C_n(x;a|q)t^n,\qquad x=\frac{z+z^{-1}}2.$$
Comparing it  with \eqref{ag} gives the quadratic transformation formulas
\begin{subequations}\label{qt}
\begin{align}\label{djt}
{}_2\phi_1\left(\begin{matrix}
a z,-az\\ -a^2
\end{matrix};q,\frac tz\right)
&=\frac{(a^2tzq;q^2)_\infty}{(tq/z;q^2)_\infty}\,
{}_3\phi_2\left(\begin{matrix}
a^2,a^2q,a^2z^2\\ a^4,a^2tz q
\end{matrix};q^2,\frac tz\right),\\
\label{jt}
{}_2\phi_1\left(\begin{matrix}
a z^2,az^2q\\ a^2q
\end{matrix};q^2,\frac {t^2}{z^2}\right)
&=\frac{(-atz;q)_\infty}{(-t/z;q)_\infty}\,
{}_3\phi_2\left(\begin{matrix}
a,-a,az^2\\ a^2,-atz 
\end{matrix};q,\frac tz\right).\end{align}
\end{subequations}
The identity \eqref{jt} is due to Jain \cite{j}.
Although we would be surprised if \eqref{djt} were new, 
we could not find it in the literature.

A pair of closely related identities  were obtained by Gessel and Stanton \cite[Eq.\ (5.6), (5.18)]{gs}, namely,
\begin{subequations}\label{gsi}
\begin{align}
{}_2\phi_1\left(\begin{matrix}
a,-a\\ -c
\end{matrix};q,cx\right)
&\doteq\frac{(a^2x;q^2)_\infty}{(x;q^2)_\infty}\,
{}_3\phi_2\left(\begin{matrix}
c,cq,a^2\\ c^2,q^2/x
\end{matrix};q^2,q^2\right),\\
{}_2\phi_1\left(\begin{matrix}
a,aq\\ c^2q
\end{matrix};q^2,c^2x^2\right)
&\doteq\frac{(ax;q)_\infty}{(x;q)_\infty}\,
{}_3\phi_2\left(\begin{matrix}
c,-c,a\\ c^2,q/x
\end{matrix};q,q\right).
\end{align}
\end{subequations}
Here, $\doteq$ means  identity as formal power series in $x$. 
Although the right-hand sides converge for generic values of the parameters, they have poles accumulating at $x=0$, so the corresponding power series diverge. Thus, \eqref{gsi} do  not hold as identities between convergent series (and the authors of \cite{gs} make no such claim). However, applying \cite[III.34]{gr} to \eqref{qt}
and changing parameters, one obtains
\begin{align*}
{}_2\phi_1\left(\begin{matrix}
a,-a\\ -c
\end{matrix};q,cx\right)
&=\frac{(a^2x;q^2)_\infty}{(x;q^2)_\infty}\,
{}_3\phi_2\left(\begin{matrix}
c,cq,a^2\\ c^2,q^2/x
\end{matrix};q^2,q^2\right)\\
&\quad+\frac{(a^2,c^2x;q^2)_\infty}{(-c,cx;q)_\infty(1/x;q^2)_\infty}
\,
{}_3\phi_2\left(\begin{matrix}
cx,cqx,a^2x\\ c^2x,q^2x
\end{matrix};q^2,q^2\right)
,\\
{}_2\phi_1\left(\begin{matrix}
a,aq\\ c^2q
\end{matrix};q^2,c^2x^2\right)
&=\frac{(ax;q)_\infty}{(x;q)_\infty}\,
{}_3\phi_2\left(\begin{matrix}
c,-c,a\\ c^2,q/x
\end{matrix};q,q\right)\\
&\quad+ \frac{(a,c^2x;q)_\infty}{(c^2q,c^2x^2;q^2)_\infty(1/x;q)_\infty}\,
{}_3\phi_2\left(\begin{matrix}
cx,-cx,ax\\ c^2x,qx
\end{matrix};q,q\right).
\end{align*}
These identities can be viewed as analytic versions of the formal identities \eqref{gsi}.

Finally,
we note that an interesting identity is obtained by substituting $z=\ti$ in \eqref{acg} and using \cite[Ex.\ (7.17)]{gr}
$$C_n(0;a|q)=\begin{cases}\displaystyle (-1)^k\frac{(a^2;q^2)_k}{(q^2;q^2)_k}, & n=2k,\\
0, & n=2k+1. \end{cases} $$
After replacing $a$ by $\ti a$ and $t$ by $\ti t$, we obtain 
\begin{equation}\label{nqt}{}_2\phi_1\left(\begin{matrix}
a,-a\\ a^2
\end{matrix};q,t\right)=\frac{(-t;q^2)_\infty}{(tq;q^2)_\infty}\,{}_2\phi_1\left(\begin{matrix}
-a^2q,-a^2q^3\\ a^4q^2
\end{matrix};q^4,t^2\right). \end{equation}
This quartic transformation can also be proved by combining the two identities \eqref{qt}. We have not been able to find it in the literature, but again find it hard to believe that it is new. It is interesting to compare \eqref{nqt} with Koornwinder's identities  \cite[Ex\ 3.2(iii)]{gr}
$$ {}_2\phi_1\left(\begin{matrix}
a,-a\\ a^2
\end{matrix};q,t\right)=(-t;q)_\infty\, {}_2\phi_1\left(\begin{matrix}
0,0\\ a^2q
\end{matrix};q^2,t^2\right)=\frac 1{(t;q)_\infty}\,{}_0\phi_1\left(-;a^2q
;q^2,a^2t^2q\right).$$

 \end{document}